\documentclass[10pt,reqno]{amsart}
\usepackage{amsmath}
\usepackage{amssymb}
\usepackage{amsthm}
\usepackage{amscd, amsfonts, mathrsfs}

\usepackage{amsaddr}
\usepackage{cases}
\usepackage[dvips]{epsfig}
\usepackage{verbatim} 
\usepackage{epsf}
\usepackage[bookmarksnumbered, pdfpagelabels=true, plainpages=false, colorlinks=true,   linkcolor=black,citecolor=black,urlcolor=black]{hyperref}

\theoremstyle{plain}
\newtheorem{theorem}{Theorem}[section]

\newtheorem{lemma}[theorem]{Lemma}
\newtheorem{proposition}[theorem]{Proposition}

\allowdisplaybreaks

\theoremstyle{definition}
\newtheorem{remark}[theorem]{Remark}
\newtheorem{notation}[theorem]{Notation}

\numberwithin{equation}{section}
\allowdisplaybreaks

\setlength{\textwidth}{6.5in}     
\setlength{\oddsidemargin}{0in}   
\setlength{\evensidemargin}{0in}  
\setlength{\textheight}{8.7in}    
\setlength{\topmargin}{0in}       
\setlength{\headheight}{0in}      
\setlength{\headsep}{0.3in}        
\setlength{\footskip}{.2in}       


\baselineskip=7.0mm
\setlength{\baselineskip}{1.09\baselineskip}


\newcommand{\cD}{\mathcal D}

\newcommand{\cH}{\mathcal H}

\newcommand{\ccD}{\mathscr{D}}

\newcommand{\ccP}{\mathscr{P}}
\newcommand{\ccPz}{\mathscr{P}_0}

\newcommand{\al}{\alpha}
\newcommand{\be}{\beta}
\newcommand{\ga}{\gamma}
\newcommand{\Ga}{\Gamma}
\newcommand{\de}{\delta}

\newcommand{\la}{\lambda}

\newcommand{\si}{\sigma}

\newcommand{\Om}{\Omega}

\newcommand{\TT}{\mathbb T}
\newcommand{\RR}{\mathbb R}

\newcommand{\rar}{\rightarrow}

\newcommand{\vol}{\operatorname{vol}}
\newcommand{\id}{\operatorname{id}}
\newcommand{\dive}{\operatorname{div}}
\newcommand{\curl}{\operatorname{curl}}

\newcommand{\norm}[1]{\Vert#1\Vert}
\newcommand{\abs}[1]{\vert#1\vert}
\newcommand{\nnorm}[1]{\left\Vert#1\right\Vert}




\def\cPt{{\tilde{\mathcal P}}}

\def\fractext#1#2{{#1}/{#2}}


\def\indeq{\qquad{}}                     



\title[Free-boundary Euler]{A priori estimates for the free-boundary Euler equations
 with surface tension
 in three
dimensions}
\author[Disconzi]{Marcelo M.~Disconzi}
\address{\vskip -0.25cm Department of Mathematics\\
Vanderbilt University\\Nashville, TN 37240, USA}
\email{marcelo.disconzi@vanderbilt.edu}
\thanks{Marcelo M.~Disconzi is partially supported by NSF grant 1305705, NSF grant 1812826, 
a Sloan Research Fellowship provided by the
Alfred P. Sloan foundation, and a Discovery grant administered by Vanderbilt University.}

\author[Kukavica]{Igor Kukavica}
\address{\vskip -0.25cm  Department of Mathematics\\
University of Southern California \\Los Angeles, CA 91107, USA}
\email{kukavica@usc.edu}
\thanks{Igor Kukavica is partially supported by NSF grant 1615239 and NSF grant 1907992}


\newcommand{\IntDom}{\underset{0 \leq t < T}{\bigcup} \{t\} \times \Om(t)}
\newcommand{\srest}{\mathord{|}}

\setcounter{secnumdepth}{4}
\setcounter{tocdepth}{4}
\expandafter\def\csname r@tocindent4\endcsname{0pt}

\makeatletter
\def\paragraph{\@startsection{paragraph}{4}%
  \z@\z@{-\fontdimen2\font}%
  {\normalfont\it}}
\makeatother

\begin{document}

\maketitle

\begin{abstract}
We derive a priori estimates for the incompressible free-boundary Euler equations
with surface tension
in three spatial dimensions.
Working in Lagrangian coordinates,
we provide a~priori estimates for the local existence when the initial
velocity, which is rotational,
belongs to $H^3$, with some additional regularity on the normal component
of the initial velocity. This lowers the requirement on the regularity of initial
data in the Lagrangian setting.
Our methods are direct and involve
three key elements: estimates for the pressure, the boundary regularity 
provided by the mean curvature, and the Cauchy invariance.
\end{abstract}



\section{Introduction\label{section_intro}}
In this paper we derive a priori estimates for the incompressible free-boundary 
Euler equations with surface tension in three space dimensions. 
While such equations
have been extensively studied,
estimates with optimal regularity are not available. With this eventual goal in mind,
we  provide estimates for the Lagrangian
formulation of the system that close with the velocity in $H^{3}$,
lowering the regularity from previously known results (see the literature review below).

The incompressible free-boundary Euler equations in a domain of $\RR^3$ are given by
\begin{subequations}{\label{free_Euler_system}}
\begin{alignat}{5}
\frac{\partial u}{\partial t}+ \nabla_u u + \nabla p &&\, = \,& \, 0 &&  \hspace{0.25cm}   \text{ in } && \ccD,
\label{free_Euler_eq}
 \\
\dive u  && \, = \, & \, 0 && \hspace{0.25cm}  \text{ in }&& \,  \ccD, 
\label{free_Euler_div}
\\
p  && \, = \,& \, \si \cH  &&  \hspace{0.25cm}  \text{ on } && \, \partial \ccD, 
\label{free_Euler_bry_p} \\
\left. (\partial_t + u_{\alpha} \partial_{x_\alpha})\right|_{\partial \ccD}  && \,  \in \,& T \partial \ccD,  &&\, &&  
\label{free_Euler_bry_u}
\\
u(0, \cdot)  =  u_0,
\hspace{0.2cm}
\Om(0) && \, = \, & \, \Om_0,
\label{free_Euler_ic} 
\end{alignat}
\end{subequations}
where
  \begin{equation*}
    \ccD = \IntDom.   
  \end{equation*}
Above, the quantities $u = u(t,x)$ and $p=p(t,x)$ represent the velocity and 
the pressure of the fluid, $\Om(t) \subset \RR^3$ is the moving (i.e., changing over time) domain,
which may be written as $\Om(t) = \eta(t)(\Om_0)$, where $\eta$ is the flow of $u$,
$\si$ is a non-negative constant
known as the coefficient of surface tension,
and $\cH$ is the mean curvature of the moving (time-dependent) boundary $\partial \Om(t)$.
Note that since $\cH$ is the mean curvature of the embedding of the 
spatial boundary $\partial \Om(t)$ into $\RR^3$, \eqref{free_Euler_bry_p} means
that for each $t$ one has $p(t, \cdot ) = \sigma\cH$.
Also, 
$T\partial \ccD$ is the tangent bundle of $\partial \ccD$,
with $\partial$ denoting the space-time boundary.
The equation \eqref{free_Euler_bry_u} means that 
the boundary $\partial \Om(t)$
moves at a speed equal to the normal component of $u$. 
The quantity $u_0$ is the velocity at time zero
(necessarily divergence-free
by \eqref{free_Euler_div}) and $\Om_0$ is the 
domain at the initial time, assumed with smooth boundary.
The symbol
$\nabla_u$ is the derivative in the direction of $u$, often written as $u \cdot \nabla$.
The unknowns
in \eqref{free_Euler_system} are $u$, $p$, and $\Om(t)$.
Note that $\cH$ and $T\partial \ccD$ are functions of the unknowns 
and thus have to be determined alongside a solution to the problem.

The first existence results for \eqref{free_Euler_system} 
(or,
equivalently, for the equations in Lagrangian coordinates, see
\eqref{Lagrangian_free_Euler_system} below), are those of
Nalimov 
\cite{NalimovCauchyPoisson}
and Yosihara
\cite{YosiharaGravity}, who considered regular irrotational data.
In the case of zero surface tension, Ebin has shown 
in  \cite{Ebin_ill-posed}
that the problem is ill-posed without the Rayleigh-Taylor
stability condition.
The problem of well-posedness under this stability condition and in
the case of zero surface tension was obtained by Wu
\cite{WuWaterWaves2d, 
WuWaterWaves}.
Recently, Wang~et~al have obtained in 
\cite{WZZZ} the local existence under the sharp Sobolev
regularity $H^{2.5+\delta}$ for the zero surface tension case,
extending the previous result of Alazard~et~al
\cite{AlazardSobolevEstimates}, who considered irrotational data.

The case with non-zero surface tension has proven to be more difficult
to 
treat in 
low regularity spaces, at least in the case of rotational fluids. Currently, one does not have estimates
that close in spaces near the threshold $H^{2.5+\delta}$.
In 
\cite{SchweizerFreeEuler},
Schweizer constructed solutions with rotational data in
$H^{4.5}$ with an additional vorticity condition at the surface.
Coutand and Shkoller \cite{CoutandShkollerFreeBoundary} used the Lagrangian formulation and
 constructed solutions with  
$H^{4.5}$ initial data without this restriction.
At the same time, in \cite{ShatahZengGeometry} Shatah and Zeng
obtained a~priori estimates for $H^{3}$ data in Eulerian coordinates
using techniques of infinite dimensional geometry in the spirit of
Ebin and Marsden \cite{EbinMarsden} (see also
\cite{ShatahZengInterface}, where the authors showed how to use their a priori estimates
to obtain a local existence result).
Ignatova and the second author obtained 
in \cite{IgorMihaelaSurfaceTension} 
a~priori estimates with interior regularity in $H^{3.5}$, using
the Lagrangian (direct) approach, while Ebin and the first author
established a local-existence result in $H^{3.5 + \de}$ using a combination
of the Lagrangian approach, infinite-dimensional geometry, and semi-group theory.

For other results on irrotational fluids with surface tension
cf.~\cite{ 
AlazardStabilizationSurfaceTension, 
AlazardCapillaryWaterWaves, 
AlazardWaterWaveSurfaceTension, 
AlazardDispersiveSurfaceTension, 
AmbroseVortexSheets, 
AmbroseMasmoudiWaterWaves, 
Beale_et_al_Growth, 
FeffermanetallSplashSurfaceTension, 
IonescuGlobal3dCapillary, 
GermainMasmoudiShatahGlobalCapillary, 
IfrimTataru2dCapillary, 
IonescuPusateriGlobal2dwaterSurfaceTension, 
YosiharaGravitySurfaceTension}. 
Further related results with non-zero surface tension, including the case of rotational fluids,
vortex sheets, two-phase fluids, and singular limits, are
\cite{
ShkollerVortexSheets, 
CoutandShkollerSplash, 
Disconzilineardynamic,
DisconziEbinFreeBoundary2d,
FeffermanIonescuLie, 
IonescuPusateriGlobal2dwaterModel, 
Ogawa-Tani_FreeBoundarySurfaceTension, 
PusateriTwoPhaseOnePhaseLimitSurfaceTension}. 
Free-boundary problems constitute a very active and fast-growing area of research, and a complete,
or even thorough review of prior works is beyond the scope of this paper. A partial list
of references relevant to the above discussion and the results of this paper is
\cite{
AlazardAboutGlobalExistence,
AlazardCollectionWaterWaves,
AlazardCauchyWaterWaves,
AlazardCauchyTheoryWaterWaves,
AlazardDelortGlobal2dWater,
BieriWu1,
BieriWu2,
FeffermanetallSplash,
FeffermanStructural,
FeffermanetallMuskat,
ShkollerElliptic, 
ChristodoulouLindbladFree, 
CoutandSingularity,
MR2660719,
CraigHamiltonianWaterWaves,
PoryferreEmergingBottom, 
GermainMasmoudiShatahGlobalWaterWaves3D,
IfrimHunterTataru,
IfrimTataruGlobalWater,
IfrimTataruGravityConstant,
Iguchi_et_al_FreeBoundary,
IonescuPusateriWaterWaves2d,
KukavicaTuffaha-Free2dEuler,
KukavicaTuffaha-RegularityFreeEuler,
KukavicaTuffahaVicol-3dFreeEuler,
LannesWaterWaves, 
LannesWaterWavesBook,
LindbladFree1,
Lindblad-LinearizedFreeBoundary,
LindbladNordgren-AprioriFreeBoundary,
Ogawa-Tani_FiniteDepth,
WuAlmostGlobal,
WuGlobal}.

The aim of the present paper is to lower the known initial interior 
regularity for the initial data to $H^{3}$ using the Lagrangian
approach. 
We believe that
the direct nature of the proof 
which we provide
will allow further lowering of the
regularity to the optimal one
$H^{2.5+\delta}$.
So far, this has only been achieved in the irrotational case
by Alazard~et~al in \cite{AlazardWaterWaveSurfaceTension}.

The paper is organized as follows. Section~\ref{section_introduction}
contains the main setup of the problem and the main statement,
Theorem~\ref{main_theorem}.
Section~\ref{section_aux} provides preliminary bounds on the Lagrangian variable and the cofactor matrix.
It also provides some basic geometric results that are used throughout (see 
Lemma~\ref{lemma_geometric}).
The next two sections
provide the pressure estimates and the boundary regularity of the Lagrangian flow.
Section~\ref{section_L_2_estimate} contains the main inequality for system differentiated with 
respect to time three times. 
In Section~\ref{section_time_zero}, we deduce regularity of time derivatives and the pressure at time zero; this is where some of the technical conditions stated in Theorem \ref{main_theorem},
namely, an extra regularity of some quantities on the boundary,
becomes apparent.
Section~\ref{section_H_1_estimate}
contains the main estimate for the twice time differentiated system.
The next section contains the statement on the boundary regularity of the normal component of the velocity. 
Section~\ref{section_div_curl} provides estimates for the divergence and the curl
of the velocity. Finally, in the last section we provide the proof of the main statement.

\section{The system in Lagrangian coordinates and the main result\label{section_introduction}}

We focus on the case $\si > 0$ and consider the model case
\begin{gather}
\Om_0 \equiv \Om = \TT^2 \times (0,1).
\nonumber
\end{gather}
Denoting coordinates on $\Om$ by $(x^1, x^2, x^3)$, set
\begin{gather}
\Ga_1 = \TT^2 \times \{x^3 = 1\}
\nonumber
\end{gather}
and
\begin{gather}
\Ga_0 = \TT^2 \times \{x^3 = 0\},
\nonumber
\end{gather}
so that $\partial \Om = \Ga_0 \cup \Ga_1$. 
Using a change of coordinates, it is easy to adjust the approach to cover the case when the
initial domain is a graph
(cf.~\cite{KukavicaTuffahaVicol-3dFreeEuler});
the estimates remain unchanged modulo lower order terms.
When the domain is not a graph,
it is possible to use a straightening of the boundary and a partition
of unity, as in \cite{CoutandShkollerFreeBoundary}.

We assume that the lower boundary,
which corresponds to the rigid bottom of the fluid domain,
does not move, and thus $\eta(t)(\Ga_0) = \Ga_0$,
where $\eta$ is the flow of the vector field $u$.
We introduce
the Lagrangian velocity and the pressure, respectively,
by $v(t,x) = u(t,\eta(t,x))$ and $q(t,x) = q(t,\eta(t,x))$.
Therefore,
\begin{gather}
\partial_t \eta = v.
\label{eta_dot}
\end{gather}
Denoting by $\nabla$ the derivative
with respect to the spatial variables $x$, introduce the matrix
\begin{gather}
a = ( \nabla \eta )^{-1},
\nonumber
\end{gather}
which is well defined for $\eta$ near the identity. The map $\eta$ is volume-preserving
as a consequence of \eqref{free_Euler_div}, which in turn implies the Piola identity
\begin{gather}
\partial_\be a_{\be\al} = 0.
\label{div_identity}
\end{gather}
(The identity \eqref{div_identity} can be verified by direct computation using
\eqref{a_explicit} below, or cf.~\cite[p.~462]{EvansPDE}.)
Above and throughout, we adopt the following agreement.

\begin{notation}
We denote by $\partial_\al$ spatial derivatives, i.e., 
$\partial_\al = \fractext{\partial}{\partial x_\al}$, for
$\al = 1, 2, 3$. Greek indices ($\al, \be$, etc.) range from $1$ to $3$ and Latin 
indices ($i, j$, etc.), range from $1$ to $2$. Repeated indices are summed over their range.
\end{notation}
In terms of $v$, $q$, and $a$, the system \eqref{free_Euler_system}
becomes
\begin{subequations}{\label{Lagrangian_free_Euler_system}}
\begin{alignat}{5}
\partial_t v_\al + a_{\mu\al} \partial_\mu q &&\, = \,& \, 0 &&  \text{ in } && [0,T) \times \Om,
\label{Lagrangian_free_Euler_eq}
 \\
a_{\al\be}\partial_\al v_\be &&\, = \,& \, 0 &&  \text{ in } && [0,T) \times \Om,
\label{Lagrangian_free_Euler_div}
 \\
 \partial_t a_{\al\be} 
+  a_{\al \ga} \partial_\mu v_\ga a_{\mu \be} &&\, = \,& \, 0 &&  \text{ in } && [0,T) \times \Om,
\label{Lagrangian_free_Euler_a_eq}
\\
a_{\mu \al} N_\mu q + \si |a^T N | \Delta_g \eta_\al &&\, = \,& \, 0 &&  \text{ on } && [0,T) \times \Ga_1,
\label{Lagrangian_bry_q} 
\\
v_\mu N_\mu   &&\, = \,& \, 0 &&  \text{ on } && [0,T) \times \Ga_0,
\label{Lagrangian_bry_v} 
\\
 \eta(0,\cdot) = \id,
\hspace{0.2cm}
 v(0, \cdot)&& \, = \, & \, v_0=(v_{01},v_{02},v_{03}) &&  \text{ in } && \Omega,
\label{Lagrangian_free_Euler_ic} 
\end{alignat}
\end{subequations}
where $\id$ is the identity diffeomorphism on $\Om$, 
$N$ is the unit outer normal to $\partial \Om$, $a^T$ is the transpose of
$a$, $|\cdot|$ is the Euclidean norm, and $\Delta_g$ is the Laplacian on the boundary
with respect to the metric $g_{ij}$ 
induced on
$\partial \Om(t)$ by the embedding $\eta$. Explicitly,
\begin{gather}
g_{ij} = \partial_i \eta_\mu \partial_j \eta_\mu
\label{metric_def}
\end{gather}
and 
\begin{gather}
\Delta_g  (\cdot) = \frac{1}{\sqrt{g}} \partial_i (\sqrt{g} g^{ij} \partial_j (\cdot) ),
\label{Laplacian_def}
\end{gather}
where $g$ is the determinant of the matrix $(g_{ij})$ and $(g^{ij})$ is the inverse matrix. 
In \eqref{Lagrangian_bry_q}, $\Delta_g \eta_\al$ simply means
$\Delta_g$ acting on the scalar function $\eta_\al$, for each $\al =1, 2, 3$.
See Lemma~\ref{lemma_geometric} below for some important identities used to obtain
\eqref{Lagrangian_bry_q}.

Since $\eta(0,\cdot) = \id$, 
the initial Lagrangian and Eulerian velocities agree, i.e., $v_0 = u_0$.
Clearly,
$v_0$ is orthogonal to $\Ga_0$ in view of \eqref{Lagrangian_bry_v}.
Note that
\begin{gather}
a(0,\cdot) = I, 
\nonumber
\end{gather}
where $I$ is the identity matrix, in light of \eqref{Lagrangian_free_Euler_ic}.

\begin{notation}
Sobolev spaces are denoted by 
$H^s(\Om)$, or simply by $H^s$ when no confusion can arise, with the corresponding
norm $\norm{\cdot}_s$; note that $\norm{\cdot}_0$ refers
to the $L^2$ norm. We denote by
$H^s(\partial \Om)$ the Sobolev space of maps defined on $\partial \Om$, with the corresponding
norm $\norm{\cdot}_{s,\partial}$, and similarly the space
$H^s( \Ga_1)$ with the norm  $\norm{\cdot}_{s,\Ga_1}$.
 The $L^p$ norms on $\Om$ and $\Ga_1$ are denoted by
$\norm{\cdot}_{L^p(\Om)}$ 
(or
$\norm{\cdot}_{L^p}$ when no confusion can arise)
and $\norm{\cdot}_{L^p(\Ga_1)}$, respectively.
We use $\srest$ to denote restriction.
\end{notation}

We now state our main result. 

\begin{theorem}
Let $\Om$ be as described above,
and let $\si >0$. 
Let $v_0$ be a smooth divergence-free vector field on $\Om$ and assume that 
$v_0$ is tangent to $\Ga_0$.
Then there exist $T_*>0$ 
and 
a constant $C_0$, depending only on $\si>0$, $\norm{v_0}_3$, 
and $\norm{v_{03}}_{4,\Ga_1}$, 
such that any smooth solution
$(v,q)$ to \eqref{Lagrangian_free_Euler_system} with initial condition
$v_0$ and defined on the time interval $[0,T_*)$, satisfies
\begin{align}
\begin{split}
\norm{v}_3 + \norm{\partial_t v}_{2.5} + \norm{\partial^2_t v}_{1.5}
+ \norm{\partial^3_t v}_0
+ \norm{q}_3 + \norm{\partial_t q}_{2} + \norm{\partial^2_t q}_1 \leq C_0.
\end{split}
   \label{EQ03}
\end{align}
\label{main_theorem}
\end{theorem}

The rest of the paper is dedicated to proving
this statement. We would like to point out that the theorem above
constitutes the first step toward establishing a complete proof of local
existence with the stated regularity. 
In order to obtain such a proof, one should also 
provide the construction of the solution. 
The main additional difficulty in this regard, which will be addressed 
in a future work, is to obtain a formulation of the Cauchy invariance
that is amenable to perturbations.
We would like to add that the adaptation of our results to non-flat
domains is standard. However, we believe that the
necessary straightening of the boundary and  use of a partition of
unity would make the paper more difficult to follow.



The dependence of $T_*$ and $C_0$ on a higher norm of the normal component
of the velocity on the boundary $\Ga_1$
comes from the usual difficulties caused by the moving boundary 
in free-boundary problems.
Note that if $v_0\in H^{4.5}$, as in
\cite{CoutandShkollerFreeBoundary, SchweizerFreeEuler},
then this condition is automatically satisfied.
Also, if the initial data is irrotational
in a neighborhood of the 
interface $\Gamma_1$, 
we have $\Delta v_{03}=0$  there, and
the additional condition on the normal component is not necessary;
cf.~Section~\ref{section_time_zero} below.

For the rest of the paper, 
let $(v,q)$  be
a smooth solution to \eqref{Lagrangian_free_Euler_system}
and assume that 
$\Om$, $\Ga_1$, and $\Ga_0$ are as described above.
%

\section{Auxiliary results}\label{section_aux}

In this section we state some preliminary results that are employed in the proof of 
Theorem~\ref{main_theorem}.

\begin{lemma}
\label{lemma_auxiliary_long}
Assume that $\norm{v}_{3} \leq M$. Then, there exists a constant $C> 0$
such that if $T \in [0,\fractext{1}{C M}]$ and $(v,q)$ is defined on $[0,T]$, 
the following inequalities hold for 
$t \in [0,T]$:\\
\noindent(i) $\norm{\eta}_3 \leq C$.\\
\noindent(ii) $\norm{ a}_2 \leq C$.\\
\noindent(iii) $\norm{ \partial_t a }_{L^p} \leq C \norm{ \nabla v}_{L^p}$, $1 \leq p \leq \infty$.\\
\noindent(iv) $\norm{ \partial_\al \partial_t a }_{L^p} \leq C \norm{\nabla v}_{L^{p_1}}
\norm{\partial_\al a}_{L^{p_2}}  + 
C\norm{\partial_\al \nabla v}_{L^p}$, 
where $\fractext{1}{p} = \fractext{1}{p_1} + \fractext{1}{p_2}$
and $1 \leq p, p_1, p_2 \leq 6$.\\
\noindent(v) $\norm{\partial_t a }_s \leq C \norm{ \nabla v}_s$, $0 \leq s \leq 2$.\\
(vi) $\norm{\partial^2_t a}_s \leq C \norm{\nabla v}_s \norm{\nabla v}_{L^\infty}
+ C \norm{\nabla \partial_t v }_s$ for $0 \leq s \leq 1$, and
$\norm{ \partial^2_t a }_1 \leq C \norm{ \nabla v}^2_{\fractext{5}{4}} + C 
\norm{ \nabla \partial_t v }_1$.\\
\noindent(vii) $\norm{\partial^3_t a}_{L^p} \leq C \norm{\nabla v}_{L^p} \norm{\nabla v}_{L^\infty}^2
+ C \norm{\nabla \partial_t v}_{L^p} \norm{\nabla v}_{L^\infty}
+ C \norm{\nabla \partial^2_t v}_{L^p}$, $1 \leq p < \infty$.\\
\noindent(viii) If $\epsilon\in(0,1]$ 
and $T \leq \fractext{\epsilon}{CM^2}$ then, for 
$t \in [0,T]$, we have
\begin{gather}
\norm{a_{\al \be} - \de_{\al\be} }_2 \leq \epsilon
\nonumber
\end{gather}
and
\begin{gather}
\norm{a_{\al \mu} a_{\be\mu} - \de_{\al\be} }_2 \leq \epsilon.
\nonumber
\end{gather}
Furthermore, if $\epsilon\in(0,1/C]$, 
where $C$ is a sufficiently large constant,
then 
we have the strong ellipticity
\begin{gather}
a_{\al \mu} a_{\be\mu} \xi_\al \xi_\be \geq \frac{1}{C} \abs{\xi}^2.
\nonumber
\end{gather}
\end{lemma}

We emphasize that the constant $C$ above, as well as in the rest of
the paper, does not depend on $M$.

\begin{proof}[Proof of Lemma~\ref{lemma_auxiliary_long}]
The proofs of these inequalities follow directly
from 
\eqref{eta_dot}
and 
\eqref{Lagrangian_free_Euler_a_eq}
as in 
any of the references
\cite{IgorMihaelaSurfaceTension,KukavicaTuffaha-RegularityFreeEuler,KukavicaTuffahaVicol-3dFreeEuler}
and are thus omitted.
\end{proof}

We also recall the following statement from
\cite{IgorMihaelaSurfaceTension} on the $H^{1}$ 
regularity of the Neumann problem.

\begin{lemma}[\cite{IgorMihaelaSurfaceTension}]
\label{lemma_elliptic_estimate}
Let $f$ be an $H^1$ solution to
\begin{alignat}{5}
\partial_\al( b_{\al\be}\partial_\be f)  &&\, = \,& \, \partial_\al\pi_\al &&  \text{ in } &&  \Om,
\nonumber
\\
b_{\al\be} \partial_\al f N_\be &&\, = \,& \, g  &&  \text{ on } && \partial \Om,
\nonumber
\end{alignat}
where $b \in H^2(\Om)$, $b_{\al\be} = b_{\be\al}$, 
$\pi, \partial_\al \pi_\al \in L^2(\Om)$, 
and $g \in H^{-\frac{1}{2}}(\partial \Om)$, with the compatibility condition
\begin{gather}
\int_{\partial \Om} (\pi_\mu N_\mu - g ) = 0.
\nonumber
\end{gather}
Suppose that $\norm{b}_{L^\infty} \leq M$ and 
$b_{\al\be}\xi_\al \xi_\be \geq \abs{\xi}^2/M$. If
\begin{gather}
\norm{ b - I }_{L^{\infty}} \leq \epsilon_0,
\nonumber
\end{gather}
where $I$ is the identity matrix and $\epsilon_0$ is a sufficiently small constant depending
on $M$, then
\begin{gather}
\norm{f }_1 \leq 
C \norm{ \pi }_{L^2{\Om}} + C \norm{ g - \pi_\mu N_\mu }_{-\frac{1}{2},\partial}
+ \norm{f}_{L^2(\Ga_1)}.
\nonumber
\end{gather}
\end{lemma}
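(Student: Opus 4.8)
The plan is to establish the elliptic estimate by a fairly standard energy argument adapted to the Neumann-type boundary condition, using the near-identity hypothesis on $b$ to absorb the variable-coefficient perturbation. First I would test the equation against $f$ itself: multiply $\partial_\al(b^{\al\be}\partial_\be f) = \partial_\al \pi^\al$ by $f$ and integrate over $\Om$, integrating by parts to move one derivative off each side. The bulk term produces $\int_\Om b^{\al\be}\partial_\al f\,\partial_\be f$, which by the ellipticity hypothesis $b^{\al\be}\xi_\al\xi_\be \geq \abs{\xi}^2/M$ is bounded below by $\tfrac{1}{M}\norm{\nabla f}_0^2$; the boundary terms combine into $\int_{\partial\Om}(b^{\al\be}\partial_\al f N_\be - \pi^\mu N_\mu)f = \int_{\partial\Om}(g - \pi^\mu N_\mu)f$, and the right side bulk term is $\int_\Om \pi^\al \partial_\al f$. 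Thus I obtain
\begin{gather}
\frac{1}{M}\norm{\nabla f}_0^2 \leq \abs{\int_\Om \pi^\al \partial_\al f} + \abs{\int_{\partial\Om}(g-\pi^\mu N_\mu)f},
\nonumber
\end{gather}
and the first term is handled by Cauchy--Schwarz and Young's inequality, absorbing $\delta\norm{\nabla f}_0^2$ into the left, leaving $C\norm{\pi}_{L^2(\Om)}^2$. The boundary term is estimated by the duality pairing $H^{-1/2}(\partial\Om)$--$H^{1/2}(\partial\Om)$ together with the trace inequality $\norm{f}_{1/2,\partial} \leq C\norm{f}_1$, again using Young to absorb $\delta\norm{f}_1^2$.

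The remaining issue is that the estimate so far controls only $\norm{\nabla f}_0$, whereas the conclusion is stated in terms of the full $\norm{f}_1$; to close this one needs to control $\norm{f}_0$ as well, and here the subtlety is that the problem has a Neumann-type (rather than Dirichlet) condition, so constants are essentially in the kernel and $f$ is only determined up to the additive behaviour measured by its trace on $\Ga_1$. This is precisely why the term $\norm{f}_{L^2(\Ga_1)}$ appears on the right. I would handle it with a Poincar\'e-type inequality on the slab $\Om = \TT^2 \times (0,1)$: for any $f \in H^1(\Om)$ one has $\norm{f}_0 \leq C\norm{\nabla f}_0 + C\norm{f}_{L^2(\Ga_1)}$, obtained by writing $f(x',x^3) = f(x',1) - \int_{x^3}^1 \partial_3 f(x',s)\,ds$, squaring, and integrating. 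Combining this with the gradient bound and reabsorbing yields $\norm{f}_1 \leq C\norm{\pi}_{L^2(\Om)} + C\norm{g-\pi^\mu N_\mu}_{-1/2,\partial} + C\norm{f}_{L^2(\Ga_1)}$.

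I expect the main technical point — more bookkeeping than genuine obstacle — to be the treatment of the variable coefficient $b$ in the boundary-trace estimate: the pairing $\langle g - \pi^\mu N_\mu, f\rangle$ is the clean object, but one must be careful that the integration by parts really does produce exactly $b^{\al\be}\partial_\al f N_\be$ on the boundary and that this matches the prescribed flux $g$, using the stated boundary condition; the compatibility condition $\int_{\partial\Om}(\pi^\mu N_\mu - g) = 0$ guarantees the problem is solvable and is what makes the pairing with $f$ (rather than with $f$ minus its mean) legitimate in the first place. The hypothesis $\norm{b-I}_{L^\infty} \leq \epsilon_0$ is not strictly needed for this $H^1$ estimate given that ellipticity is already assumed, but it would be used to keep all constants uniform and, more importantly, is the form in which this lemma gets invoked later (where $b$ is built from $a$, close to the identity by Lemma~\ref{lemma_auxiliary_long}(viii)); I would simply carry it along. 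No higher regularity of $b$ beyond $L^\infty$ enters at this level, so the $H^2$ hypothesis on $b$ is there for downstream use rather than for this estimate.
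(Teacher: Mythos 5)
Your proposal is correct and is exactly the standard weak-formulation/energy argument that the paper itself defers to (the paper's ``proof'' is only a citation of \cite[Lemma~3.3]{IgorMihaelaSurfaceTension}, whose argument is the one you give: test with $f$, use ellipticity, pair the boundary data in $H^{-1/2}$--$H^{1/2}$, and recover $\norm{f}_0$ via the slab Poincar\'e inequality through the trace on $\Ga_1$). Your side remarks are also accurate: the compatibility condition is for solvability rather than for the a~priori bound, and neither $b\in H^2$ nor $\norm{b-I}_{L^\infty}\le\epsilon_0$ is needed for this particular estimate, only for the way the lemma is invoked later.
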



\begin{notation}
In the rest of the paper, the symbol $C$ denotes a positive sufficiently large
constant.
It can vary 
from expression to expression, but it is always  independent of 
$\norm{v}_3$, $\norm{\partial_t v}_{2.5}$, $\norm{\partial^2_t v}_{1.5}$,
$\norm{\partial^3_t v}_0$, $\norm{q}_3$,
$\norm{\partial_t q}_{2}$, and $\norm{\partial^2_t q}_1$.
The a~priori estimates require $T$ to be sufficiently small so
that it satisfies
$T M\le 1/C$, where $M$ is an upper bound on the norm of the solution
(cf.~Lemma~\ref{lemma_auxiliary_long} above).
In several estimates it suffices to keep track only on the number of derivatives so we 
write $\partial^\ell$ to denote any (spatial) derivative of order $\ell$ and $\overline{\partial}{}^\ell$
to denote any (spatial) derivative of order $\ell$ on the boundary, i.e., with respect to $x_i$.
We use upper-case Latin indices to denote $x_i$ or $t$, so $\overline{\partial}_A$
means $\partial_t$ or $\partial_i$.
\end{notation}

\begin{remark}
\label{remark_rational}
Throughout the paper, we have to estimate several rational functions
of tangential derivatives of $\eta$ that come from various combinations of $g^{ij}$ and
$\sqrt{g}$, where by tangential derivatives we mean $\partial_1$ and
$\partial_2$. Although there will be several different such expressions,
they can all be organized in the same fashion. Therefore, it is beneficial to introduce some general
notation and explain how these terms will be handled in a unified manner. We also take this opportunity
to introduce some symbolic notation that will help us keep track of the number of derivatives.

Consider a rational function $Q$ of the (tangential) derivatives of $\eta$:
\begin{gather}
Q = Q(\partial_1 \eta_1, \partial_2 \eta_1,
\partial_1 \eta_2, \partial_2 \eta_2,
\partial_1 \eta_3, \partial_2 \eta_3).
\nonumber
\end{gather}
We assume that $Q$ is defined in an open domain $\cD$ of $\RR^6$.
Assume that $0 \notin \overline{\cD}$ and that $(1,0,0,1, 0, 0) \in \cD$. 
This last assumption reflects the fact that at $t=0$ we have $\eta = \id$, so that
$\partial_1 \eta_1 = \partial_2 \eta_2 = 1$ at $t=0$, while 
the other derivatives are zero.
In what follows it suffices to keep track of the generic form of some expressions so 
we write $Q$ symbolically as 
 \begin{gather}
 Q = Q(\overline{\partial} \eta).
 \nonumber
 \end{gather}
Then
\begin{gather}
\overline{\partial}_A Q (\overline{\partial} \eta)= \widetilde{Q}^i_{\al} (\overline{\partial} \eta)
\overline{\partial}_A \partial_i \eta_\al,
\nonumber
\end{gather}
where the terms $\widetilde{Q}^i_{\al} (\overline{\partial} \eta)$ are also 
rational function of derivatives of $\eta$ with 
respect to $x_i$. Note that
$\widetilde{Q}^i_{\al} (\overline{\partial} \eta)$
are simply the partial derivatives of $Q$ evaluated at $\overline{\partial} \eta$.
We write the last equality symbolically as
\begin{gather}
\overline{\partial}_A Q(\overline{\partial} \eta) =
\widetilde{Q}(\overline{\partial} \eta) \overline{\partial}_A \overline{\partial} \eta.
\label{partial_Q_symb}
\end{gather}
Regarding $\widetilde{Q}$, we assume that for all $\overline{\partial} \eta$ taking values
in some some small neighborhood $\cD^\prime$ 
of $(1,0,0,1,0,0)$, and for any $1 < s \leq 1.5$,
we have
\begin{align}
\norm{\widetilde{Q}(\overline{\partial} \eta)}_{s,\Ga_1} \leq C
\label{Q_sym_estimate}
\end{align}
for some constant $C$ depending only on $\cD^\prime$.

For $s > 1$, we have 
\begin{gather}
\norm{\overline{\partial}_A Q(\overline{\partial} \eta)}_{s,\Ga_1} \leq C_1
\norm{\widetilde{Q}(\overline{\partial} \eta)}_{s,\Ga_1}
\norm{ \overline{\partial}_A \overline{\partial} \eta}_{s,\Ga_1}
\nonumber
\end{gather}
where $C_1$ depends only on $s$ and on the domain $\Ga_1$.

The term $\norm{ \widetilde{Q}(\overline{\partial} \eta)}_{s,\Ga_1}$
can be estimated using the Sobolev norm of the map $\widetilde{Q}$, i.e.,
 $\norm{\widetilde{Q}}_{H^s(\cD^\prime)}$, and the Sobolev norm of  $\overline{\partial} \eta$, i.e., 
$\norm{\overline{\partial} \eta}_{s,\Ga_1}$.
Under the conditions of Lemma~\ref{lemma_auxiliary_long}, we have 
\begin{gather}
\norm{ \overline{\partial} \eta - \overline{\partial} \eta (0)}_{L^\infty(\Ga_1)}
\leq \int_0^t \norm{\partial_t \overline{\partial} \eta }_{L^\infty(\Ga_1)}
\leq C_2 t \norm{v}_3 \leq C_2 Mt,
\nonumber
\end{gather}
where $C_2$ depends only on the domain $\Ga_1$ and we used that
$H^{1.5}(\Ga_1)$ embeds into $C^0(\Ga_1)$. 
Therefore, if $C M T \leq 1$,
we can guarantee that 
\begin{gather}
\overline{\partial}\eta(\Ga_1) \subset \cD^\prime,
\nonumber
\end{gather}
and thus, shrinking $\cD$ if necessary, we can 
apply the estimate \eqref{Q_sym_estimate}.
Since Lemma~\ref{lemma_auxiliary_long} also provides a bound for 
$\norm{\overline{\partial} \eta}_{s,\Ga_1}$, with $s \leq 1.5$, we conclude that 
\begin{gather}
\norm{\overline{\partial}_A Q(\overline{\partial} \eta)}_{s,\Ga_1} \leq C
\norm{ \overline{\partial}_A \overline{\partial} \eta}_{s,\Ga_1},
\, \text{ for } \, 1 < s \leq 1.5,
\label{rational_estimate}
\end{gather}
where $C$ depends only on $M$, $s$, and $\Ga_1$, and provided that $t$ is small
enough. The above also shows that 
\begin{gather}
\norm{ Q(\overline{\partial} \eta)}_{s,\Ga_1} \leq C
\norm{  \overline{\partial} \eta}_{s,\Ga_1},
\, \text{ for } \, 1 < s \leq 1.5
   .
\label{rational_estimate_2}
\end{gather}
Above, the restriction $s  > 1$ comes from the Sobolev multiplicative theorem, while
$s \leq 1.5$ allows us to invoke the estimates of Lemma~\ref{lemma_auxiliary_long}.
\end{remark}

We also need some geometric identities that may be known to specialists,
but we state them below and provide some of the corresponding proofs
for the reader's convenience.

\begin{lemma}
\label{lemma_geometric}
Denote by $n$ the unit outer normal to $\eta(\Ga_1)$. Then
\begin{gather}
n\circ \eta = \frac{a^T N}{|a^T N|}.
\label{normal_identity}
\end{gather}
Denoting by $\tau$ the tangent bundle of $\overline{\eta(\Om)}$ and by
$\nu$ the normal bundle of $\eta(\Ga_1)$, the canonical projection
$\Pi\colon \tau \srest \eta(\Ga_1) \rar \nu$ is given by
\begin{gather}
\Pi_{\al\be} = \de_{\al\be} - g^{kl} \partial_k \eta_\al \partial_l \eta_\be.
\label{projection_identity}
\end{gather}
Furthermore, the following identities hold:
\begin{gather}
\Pi_{\al\la} \Pi_{\la\be} = \Pi_{\al\be},
\label{projections_contraction}
\end{gather}
\begin{gather}
|a^T N | = \sqrt{g},
\label{aT_identity}
\end{gather}
\begin{gather}
\sqrt{g} \Delta_g \eta_\al = \sqrt{g} g^{ij}  \partial^2_{ij} \eta_\al 
- \sqrt{g} g^{ij} g^{kl} \partial_k\eta_\al \partial_l \eta_\mu \partial^2_{ij} \eta_\mu,
\label{Laplacian_eta_identity}
\end{gather}
\begin{align}
\begin{split}
\overline{\partial}_A(\sqrt{g} \Delta_g \eta_\al ) 
 = & \, \partial_i \Big( \sqrt{g} g^{ij} (\de_{\al\la} -g^{kl} \partial_k \eta_\al \partial_l \eta_\la)
\overline{\partial}_A \partial_j \eta_\la  
\\
&
\qquad\qquad+ \sqrt{g}(g^{ij} g^{kl} - g^{lj}g^{ik} ) \partial_j \eta_\al \partial_k\eta_\la 
\overline{\partial}_A \partial_l \eta_\la \Big),
\end{split}
\label{partial_Laplacian_eta_identity}
\end{align}
\begin{align}
\begin{split}
-\Delta_g (\eta_\al \srest \Ga_1) = \cH \circ \eta \, n_\al \circ \eta.
\end{split}
\label{formula_mean_curvature_embedding}
\end{align}
\end{lemma}

\begin{proof}[Proof of Lemma~\ref{lemma_geometric}]
Letting $r = \eta \srest \Ga_1$, we know that $n \circ \eta$ is given by
\begin{gather}
n \circ \eta = \frac{\partial_1 r \times \partial_2 r}{|\partial_1 r \times \partial_2 r|}
\label{normal_standard_formula}
\end{gather}
(cf.~e.g.~\cite{HanIsometricEmbedding}).
By $\det(\nabla \eta) =1$, we have
\begin{align}
\begin{split}
a = 
\begin{bmatrix}
\partial_2 \eta_2 \partial_3 \eta_3 - \partial_3 \eta_2 \partial_2 \eta _3
& \partial_3 \eta_1 \partial_2 \eta_3 - \partial_2 \eta_1 \partial_3 \eta_3
&
\partial_2 \eta_1 \partial_3 \eta_2 - \partial_3 \eta_1 \partial_2 \eta_2
\\
\partial_3 \eta_2 \partial_1 \eta_3 - \partial_1 \eta_2 \partial_3 \eta_3
&
\partial_1 \eta_1 \partial_3 \eta_3 - \partial_3 \eta_1 \partial_1 \eta _3 
&
\partial_3 \eta_1 \partial_1 \eta_2 - \partial_1 \eta_1 \partial_3 \eta_2
\\
\partial_1 \eta_2 \partial_2 \eta_3 - \partial_2 \eta_2 \partial_1 \eta_3
&
\partial_2 \eta_1 \partial_1 \eta_3 - \partial_1 \eta_1 \partial_2 \eta _3 
&
\partial_1 \eta_1 \partial_2 \eta_2 - \partial_2 \eta_1 \partial_1 \eta_2
\end{bmatrix}.
\end{split}
\label{a_explicit}
\end{align}
Using \eqref{a_explicit} to compute $a^T N$ and comparing with 
 $\partial_1 r \times \partial_2 r$, one verifies that  
$
a^T N = \partial_1 r \times \partial_2 r
$,
and then \eqref{aT_identity} follows from \eqref{normal_standard_formula}.

To prove \eqref{projection_identity}, we use \eqref{normal_identity} 
to write
\begin{align}
\begin{split}
(\de_{\al\la} - g^{kl} \partial_k \eta_\al \partial_l \eta_\la)
n_\la \circ \eta 
= \frac{ a_{\mu \al} N_\mu }{ |a^T N |} -
\frac{g^{kl} \partial_k \eta_\al \partial_l \eta_\la a_{\mu \la} N_\mu }{|a^T N|}.
\end{split}
\nonumber
\end{align}
Contracting $g^{kl} \partial_l \eta_\la a_{\mu \la} N_\mu$
with $g_{mk}$ gives
\begin{align}
\begin{split}
g_{mk} g^{kl}\partial_l \eta_\la a_{\mu \la} N_\mu  
= &\, \partial_m \eta_\la a_{3 \la}
\\
 = & \,
\partial_m \eta_1
(\partial_1 \eta_2 \partial_2 \eta_3 - \partial_2 \eta_2 \partial_1 \eta_3)
+ 
\partial_m \eta_2
(\partial_2 \eta_1 \partial_1 \eta_3 - \partial_1 \eta_1 \partial_2 \eta_3)
\\
&
+
\partial_m \eta_3
(\partial_1 \eta_1 \partial_2 \eta_2 - \partial_2 \eta_1 \partial_1 \eta_2)
=0.
\end{split}
   \label{EQ01}
\end{align}
Above, the first equality follows because $N = (0,0,1)$ (and $g_{mk} g^{kl} = \de_m^l$),
the second equality uses~\eqref{a_explicit}, and the third equality follows upon setting
$m=1$ and then $m=2$ and observing that in each case all the terms cancel out.
Thus, contracting \eqref{EQ01} with $g^{m n}$, we obtain
$
g^{nl}\partial_l \eta_\la a_{\mu \la} N_\mu  =  0
$,
and hence
\begin{align}
\begin{split}
(\de_{\al\la} - g^{kl} \partial_k \eta_\al \partial_l \eta_\la)
n_\la \circ \eta 
= \frac{ a_{\mu \al} N_\mu }{ |a^T N |}.
\end{split}
\nonumber
\end{align}
To conclude the proof of \eqref{projection_identity}, we need to verify that $\Pi(X) = 0$
if $X$ is tangent to $\eta(\Ga_1)$. Since the tangent space to  $\eta(\Ga_1)$ is spanned
by $\partial_j \eta$, for $j=1,2$, it suffices to verify the identity
for these vectors. We have
\begin{align}
\Pi_{\al \mu} \partial_j \eta_\mu & = (\delta_{\al\mu} - g^{kl}\partial_k \eta_\al \partial_l \eta_\mu)
\partial_j \eta_\mu 
\nonumber
=
\partial_j \eta_\al - 
g^{kl}\partial_k \eta_\al g_{lj} 
 = 0,
\end{align}
where we used  $g_{lj} = \partial_l \eta_\mu \partial_j \eta_\mu$ and 
$g^{kl} g_{lj} = \delta^k_j$.
Thus, \eqref{projection_identity} is proven.

The identity~\eqref{projections_contraction} follows 
from the fact that $\Pi$ is a projection operator
or, alternatively, by direct computation using \eqref{projection_identity}.
The identity~\eqref{aT_identity} follows from \eqref{normal_identity}, \eqref{normal_standard_formula},
and the standard formula 
\begin{gather}
\frac{\partial_1 r \times \partial_2 r}{|\partial_1 r \times \partial_2 r|} =
\frac{1}{\sqrt{g}} \partial_1 r \times \partial_2 r
 \nonumber
 \end{gather}
(see e.g.~\cite{HanIsometricEmbedding}).
In order to prove \eqref{Laplacian_eta_identity}, recall that 
(see e.g.~\cite{HanIsometricEmbedding})
\begin{gather}
\Delta_g \eta_\al = g^{ij} \partial^2_{ij} \eta_\al - g^{ij} \Ga^k_{ij} \partial_k \eta_\al,
\label{Laplacian_identity_standard}
\end{gather}
where $\Ga^k_{ij}$ are the Christoffel symbols.
Recalling \eqref{metric_def},
a direct computation using the definition of the Christoffel symbols
gives 
\begin{gather}
\Ga_{ij}^k = g^{kl}\partial_l \eta_\mu \partial^2_{ij} \eta_\mu,
\label{Christoffel_identity}
\end{gather}
and \eqref{Laplacian_eta_identity} follows from \eqref{Laplacian_identity_standard}
and \eqref{Christoffel_identity}. 

Now, we  move to establish \eqref{partial_Laplacian_eta_identity}.
Using \eqref{Laplacian_def},
\begin{align}
\begin{split}
\overline{\partial}_A (\sqrt{g} \Delta_g \eta_\al )
&= 
\overline{\partial}_A \partial_i (\sqrt{g} g^{ij} \partial_j \eta_\al ) 
 = \partial_i( \sqrt{g} g^{ij} \partial_j \overline{\partial}_A \eta_\al 
+ \overline{\partial}_A(\sqrt{g} g^{ij} ) \partial_j \eta_\al ).
\end{split}
\label{derivation_partial_Laplacian_1}
\end{align}
Recalling the 
standard identity (see e.g.~\cite[p.~51]{GieriBook}),
\begin{gather}
\overline{\partial}_A g = g g^{kl} \overline{\partial}_A g_{kl},
\label{identity_det_g}
\end{gather}
which is Jacobi's identity connecting the 
derivative of a determinant with a trace,
we find
\begin{gather}
 \overline{\partial}_A(\sqrt{g} g^{ij} )  = \sqrt{g}
  \left(\frac{1}{2} g^{ij}   g^{kl} - g^{lj} g^{ik} \right)  \overline{\partial}_A g_{kl} , 
    \nonumber
  \end{gather}
where we also used 
\begin{gather}
 \overline{\partial}_A g^{ij} = - g^{lj} g^{ik}  \overline{\partial}_A g_{kl}
 \label{identity_g_inverse}
 \end{gather}
that follows from differentiating $g_{kl}g^{lj} = \de_k^j$ and then contracting with
$g^{ik}$. Computing $\overline{\partial}_A g_{kl}$ directly from \eqref{metric_def}
then leads to
\begin{gather}
 \overline{\partial}_A(\sqrt{g} g^{ij} )  = \sqrt{g}
  \left(\frac{1}{2} g^{ij}   g^{kl} - g^{lj} g^{ik} \right)  ( \overline{\partial}_A \partial_k \eta_\la 
  \partial_l \eta_\la + \partial_k \eta_\la \overline{\partial}_A \partial_l \eta_\la  )
   .
\label{derivation_partial_Laplacian_2}
  \end{gather}
Using \eqref{derivation_partial_Laplacian_2} in  \eqref{derivation_partial_Laplacian_1}
then yields \eqref{partial_Laplacian_eta_identity} after some 
computation.

Finally, the identity \eqref{formula_mean_curvature_embedding} is a standard formula
for the mean curvature of an embedding into $\RR^3$ 
(see e.g.~\cite[Section~2.1]{HanIsometricEmbedding} or \cite[Section~I.2]{GieriBook}).
\end{proof}

\section{Pressure estimates}\label{section_pressure}


Through the rest of the paper, we suppose that the hypotheses of
Lemma~\ref{lemma_auxiliary_long} hold. 
 We assume further that $T$ is as in part
(viii) of that lemma, and that $(v,q)$ are defined on $[0,T)$. 


\begin{proposition}
\label{proposition_pressure_estimates}
We have the estimates
\begin{align}
\begin{split}
\norm{ q }_3  \leq &  \, C \norm{ \nabla v}_2 \norm{ v }_2 + C \norm{ \partial_t v }_{1.5,\Ga_1} + C,
\\
\norm{ \partial_t q}_{2} \leq & \, C \norm{ \nabla v}_{1.5+\de} (\norm{q}_{2.5}
+ \norm{\partial_t v}_{1.5}) + C(\norm{\nabla v}_{1.5} \norm{\nabla v}_{L^\infty} +
\norm{\nabla \partial_t v}_{1.5} ) \norm{ v }_{1.5+\de}
\\
&
+ C \norm{\partial^2_t v}_{1,\Ga_1} + C \norm{v}_{2.5} + C \norm{\partial_t v }_{2.5},
\\
\norm{\partial^2_t q}_1  \leq & \, C (\norm{v}_{1.5} \norm{\nabla v}_{L^\infty} + 
\norm{\partial_t v}_{1.5})( \norm{q}_2 + \norm{\partial_t v}_1 )
+
C \norm{\nabla v}_{L^\infty} ( \norm{\partial_t q}_1 + \norm{\partial^2_t v}_0 )
\\
& 
+ C (\norm{v}_{1.5} \norm{ \nabla v}^2_{L^\infty} + \norm{\partial_t v}_{1.5} \norm{\nabla v}_{L^\infty} + \norm{\partial^2_t v}_{1.5} ) \norm{ v}_1 + C \norm{ \partial^3_t v}_0
\\
& 
+ C (\norm{\partial_t v}_{2.5} + \norm{v}_3 \norm{\partial_t q}_1 )
+ C(1+\norm{v}^2_3 + \norm{\partial_t v}_2)(\norm{\partial_t v}_2 + \norm{v}_3^2 ),
\end{split}
\nonumber
\end{align}
for $t \in (0,T)$ and with $\de > 0$ a small number.
\end{proposition}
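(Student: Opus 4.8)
The plan is to realize $q$, $\partial_t q$, and $\partial_t^2 q$ as solutions of second-order elliptic boundary value problems of exactly the form treated in Lemma~\ref{lemma_elliptic_estimate} (and its higher-regularity analogues obtained by differentiating the equation), read off the forcing and the boundary data, and bound them using Lemma~\ref{lemma_auxiliary_long}, the geometric identities of Lemma~\ref{lemma_geometric}, and the rational-function estimates of Remark~\ref{remark_rational}.

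First I would derive the pressure equation. Differentiating the incompressibility constraint (\ref{Lagrangian_free_Euler_div}) in time, substituting $\partial_t v$ from (\ref{Lagrangian_free_Euler_eq}) and $\partial_t a$ from (\ref{Lagrangian_free_Euler_a_eq}), and using the Piola identity (\ref{div_identity}) to move the outer factor of $a$ inside the derivative, one obtains
\begin{gather}
\partial_\al\big( a^{\al\ga} a^{\mu\ga} \partial_\mu q \big) = \partial_\al \pi^\al \quad \text{in } \Om,
\nonumber
\end{gather}
where, after one more use of (\ref{div_identity}), $\pi^\al$ is a bounded rational expression in $\nabla\eta$ times $v\,\nabla v$, schematically $\pi^\al \sim a\,a\,v\,\nabla v$; this is the Lagrangian pull-back of $\Delta p = -\partial_i u^j \partial_j u^i = -\partial_i(u^j\partial_j u^i)$. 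The natural boundary condition is obtained by contracting (\ref{Lagrangian_free_Euler_eq}) with $a^{\al\ga}N_\al$ on $\partial\Om$: since $a^{\mu\ga}\partial_\mu q = -\partial_t v^\ga$, the conormal derivative associated with the operator above equals $-(a^T N)\cdot\partial_t v$. On $\Ga_1$ this is the Neumann datum $g$; on $\Ga_0$ the explicit form (\ref{a_explicit}) gives $a^{31}=a^{32}=0$ because $\eta$ preserves $\Ga_0$ (so $\partial_i\eta^3 = 0$ there), while $v^3\equiv 0$ on $\Ga_0$, so the Neumann datum vanishes on $\Ga_0$. The compatibility condition of Lemma~\ref{lemma_elliptic_estimate} holds automatically upon integrating the equation over $\Om$. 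Finally, (\ref{Lagrangian_bry_q}) together with (\ref{normal_identity}), (\ref{aT_identity}), and (\ref{formula_mean_curvature_embedding}) shows that on $\Ga_1$ one also has the Dirichlet information $q = \si\,\cH\circ\eta$, which by Remark~\ref{remark_rational} and Lemma~\ref{lemma_auxiliary_long}(i) is bounded in $H^{0.5}(\Ga_1)$ by a constant; this is what controls the $\norm{q}_{L^2(\Ga_1)}$-type term produced by the Neumann problem.

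With this in hand, $\norm{q}_3$ follows from the higher-order version of Lemma~\ref{lemma_elliptic_estimate}: the coefficient $b = a a^T$ is symmetric, uniformly elliptic, and $\epsilon$-close to $I$ in $H^2$ by Lemma~\ref{lemma_auxiliary_long}(viii), and this $H^2$-smallness is precisely what lets one absorb the top-order commutator terms of the form $\partial^2 b\,\partial q$ generated when the equation is differentiated. The estimate then bounds $\norm{q}_3$ by the $H^2$ norm of the forcing $\pi$, the $H^{1.5}(\Ga_1)$ norm of the boundary data $g-\pi^\mu N_\mu$, and lower-order terms; using $\norm{\pi}_2 \le C\norm{v}_2\norm{\nabla v}_2$ (from $\norm{a}_2\le C$ and the fact that $H^2(\Om)$ is an algebra), $\norm{\pi}_{1.5,\Ga_1}\le C\norm{\pi}_2$ by trace, $\norm{(a^T N)\cdot\partial_t v}_{1.5,\Ga_1} \le C\norm{\partial_t v}_{1.5,\Ga_1}$ (by (\ref{rational_estimate_2}) and that $H^{1.5}(\Ga_1)$ is an algebra), and $\norm{q}_{0.5,\Ga_1}\le C$, this gives the first estimate. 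For $\norm{\partial_t q}_2$ and $\norm{\partial^2_t q}_1$ I would differentiate the boundary value problem once, resp.\ twice, in time: the operator is unchanged, the forcing picks up $\partial_t^k\pi$ together with commutators $\partial_t^j b\,\partial\,\partial_t^{k-j}q$, the Neumann datum becomes $-\partial_t^k\big((a^T N)\cdot\partial_t v\big)$ (still vanishing on $\Ga_0$), and one applies the $H^2$, resp.\ $H^1$, elliptic estimate. All the $a$, $\partial_t a$, $\partial^2_t a$, $\partial^3_t a$ factors are then controlled by Lemma~\ref{lemma_auxiliary_long}(iii)--(vii), the Dirichlet data $\partial_t^k(\si\,\cH\circ\eta)$ on $\Ga_1$ are handled with (\ref{partial_Laplacian_eta_identity}) and Remark~\ref{remark_rational}, and the momentum equation $\partial_t v = -a\,\nabla q$ is used freely to trade $\norm{q}_{s+1}$ for $\norm{\partial_t v}_s$ wherever convenient; matching Sobolev exponents in each nonlinear product then produces the stated inequalities, with the genuinely lowest-order products collected into the polynomial error term $C(1+\norm{v}_3^2+\norm{\partial_t v}_2)(\norm{\partial_t v}_2+\norm{v}_3^2)$.

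The step I expect to be the main obstacle is the top time-derivative boundary term in the $\norm{\partial^2_t q}_1$ estimate, where the Neumann datum contains $(a^T N)\cdot\partial^3_t v$ and a naive bound would require $\partial^3_t v\in H^{1/2}(\Ga_1)$, which is not available. This is resolved inside the weak formulation underlying Lemma~\ref{lemma_elliptic_estimate}: writing the boundary integral $\int_{\Ga_1}(a^T N)^\be \partial^3_t v^\be\,\phi = \int_{\Om}\partial_\al\big(a^{\al\be}\partial^3_t v^\be\phi\big)$ and expanding, the Piola identity (\ref{div_identity}) kills the $\partial_\al a^{\al\be}$ term, the term $a^{\al\be}\partial_\al\partial^3_t v^\be$ is converted to lower order via three time-derivatives of the constraint (\ref{Lagrangian_free_Euler_div}), and what remains is $\int_\Om a^{\al\be}\partial^3_t v^\be\,\partial_\al\phi$, bounded by $\norm{\partial^3_t v}_0\norm{\phi}_1$; this is the origin of the $\norm{\partial^3_t v}_0$ appearing in the statement. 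The rest is bookkeeping: expanding $\partial_t^k$ of the rational coefficients and distributing the Sobolev exponents so that each nonlinear term lands in $L^2$ (or $H^1$).
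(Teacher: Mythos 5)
Your proposal is correct and follows essentially the same route as the paper: the pressure and its time derivatives are treated as solutions of the divergence-form elliptic problem obtained from the momentum equation and the (time-differentiated) divergence constraint, with the Neumann datum $-(a^TN)\cdot\partial_t^{k+1}v$ handled through the $H^{-1/2}$ duality built into Lemma~\ref{lemma_elliptic_estimate} (which is exactly where $\norm{\partial_t^3 v}_0$ enters), and the $L^2(\Ga_1)$ trace of $\partial_t^k q$ recovered from the surface-tension boundary condition (\ref{Lagrangian_bry_q}) via (\ref{aT_identity}), (\ref{partial_Laplacian_eta_identity}), and Remark~\ref{remark_rational}. The only cosmetic difference is that for $\norm{q}_3$ the paper writes the problem as a perturbation of the flat Laplacian and quotes standard $H^3$ elliptic estimates, rather than keeping the variable-coefficient operator on the left; the two formulations are equivalent given the $H^2$-smallness of $aa^T-I$ from Lemma~\ref{lemma_auxiliary_long}(viii).
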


The pressure estimates are performed similarly to \cite{IgorMihaelaSurfaceTension}. The
differences
are the spaces in which we estimate $q$ and $\partial_{t}q$.
The inequality for $\partial_{t}^2q$ is the same as in 
\cite{IgorMihaelaSurfaceTension}, except for the bound on 
$\Vert \partial_{t}^2q\Vert_{0,\Gamma_1}$, which 
we state in \eqref{EQ05} below.
The adjustment has to be made to account for the evolution of the Riemannian
metric on the boundary.

\begin{proof}[Proof of Proposition~\ref{proposition_pressure_estimates}]
Contracting $a_{\mu \al}\partial_\mu$ with \eqref{Lagrangian_free_Euler_eq} and using \eqref{Lagrangian_free_Euler_div} we find
\begin{gather}
\Delta q = \partial_\mu( (\de_{\mu\la} - a_{\mu\nu} a_{\la \nu} ) \partial_\la q ) + \partial_\mu ( \partial_t a_{\mu\la} v_\la ),
\label{press_estimate_eq}
\end{gather}
where we also used \eqref{div_identity}.
Contracting \eqref{Lagrangian_free_Euler_eq} with $a_{\mu \al} N_\mu$ and restricting to the boundary yields
\begin{gather}
\frac{\partial q}{\partial N} = ( \de_{\mu \la} - a_{\mu\nu} a_{\la \nu} )N_\mu \partial_\la q - a_{\mu \nu} N_\mu \partial_t v_\nu
\, \text{ on } \, \partial \Om.
\label{press_estimate_bry}
\end{gather}
Denoting the right-hand sides of \eqref{press_estimate_eq} and \eqref{press_estimate_bry} by $f$ and $g$, respectively,
we have a standard elliptic
estimate 
\begin{gather}
\norm{q}_3 \leq C \norm{f}_1 + C \norm{g}_{1.5,\partial} + \norm{q}_0
\nonumber
\end{gather}
(see e.g.~\cite[Theorem~3.6]{ShkollerElliptic}).
With the help of Lemma~\ref{lemma_auxiliary_long}, we find
\begin{gather}
\norm{f}_1 \leq \epsilon \norm{q}_3 + C \norm{\nabla v}_2 \norm{v}_2
\nonumber
\end{gather}
and 
\begin{gather}
\norm{g}_{1.5,\partial} \leq \epsilon \norm{q}_3 + C \norm{\partial_t v}_{1.5,\Ga_1}
   .
\nonumber
\end{gather}
To estimate $\norm{q}_0$, we use 
\begin{gather}
\left| \int_\Om q \,  \right| \leq C \norm{ \nabla(q - \overline{q} ) }_0 + C \norm{q}_{0,\Ga_1},
\nonumber
\end{gather}
which follows from the identity
\begin{gather}
\int_\Om q
   = \int_\Om q \Delta h
   = - \int_\Om \nabla q \cdot \nabla h + \int_{\partial \Om} \frac{\partial h}{\partial N} q
\nonumber
\end{gather}
and the choice
$h=x_3^2-1$.
Using \eqref{Lagrangian_bry_q}
and Lemma~\ref{lemma_auxiliary_long}, we obtain$\norm{q}_{0,\Ga_1} \leq C$. Combining the above inequalities
gives the estimate for $q$ after a simple application of the $\epsilon$-Cauchy inequality.

The estimate for $\partial_t q$ is obtained similarly after time-differentiating \eqref{Lagrangian_free_Euler_eq}
and proceeding as above.

To obtain the estimate for $\partial_t^2 q$, we differentiate \eqref{Lagrangian_free_Euler_eq} in time
twice and
apply Lemma~\ref{lemma_elliptic_estimate}. This is 
done similarly to \cite{IgorMihaelaSurfaceTension},
with an exception of the estimate on 
$\norm{\partial_{t}^2q}_{0,\Ga_1}$ which reads
\begin{align}
\begin{split}
\norm{\partial^2_t q}_{0,\Ga_1}  \leq & \, \epsilon \norm{\partial^2_t q}_1 + 
C (\norm{\partial_t v}_{2.5} + \norm{v}_3 \norm{\partial_t q}_1 )
+ C(1+\norm{v}^2_3 + \norm{\partial_t v}_2)(\norm{\partial_t v}_2 + \norm{v}_3^2 )
\end{split}
   \label{EQ05}
\end{align}
leading to the last two terms in the statement.
\end{proof}

\section{Regularity estimate for the flow\label{section_regularity}}
One of the key features of the free-boundary Euler equations with surface tension
is a gain of regularity 
for the free-boundary \cite{CoutandShkollerFreeBoundary, DisconziEbinFreeBoundary3d}.
As discussed in \cite{ShatahZengGeometry}, this gain is geometric in nature and does not
correspond to the regularity of the flow in the interior (see the counter-example
in \cite{ShatahZengGeometry}, which shows that in the interior of the domain, 
the flow cannot in general be more regular than
the velocity, even if the boundary gains regularity). This gain of regularity 
is formulated in 
the next statement.

\begin{proposition}
\label{proposition_regularity}
We have the estimate
\begin{gather}
\norm{\eta}_{3.5,\Ga_1} \leq P(\norm{q}_{1.5,\Ga_1}),
\nonumber
\end{gather}
where $P$ is a polynomial.
\end{proposition}

\begin{notation}
From here on, we use $P(\cdot)$, with indices attached when appropriate, to denote a 
generic polynomial expression of its arguments. The polynomial is assumed
nonnegative and its value may change from inequality to inequality.
Also, we write
\begin{gather}
\ccP = P( \norm{v}_3,  \norm{\partial_t v}_{2.5}, \norm{\partial^2_t v}_{1.5}, 
\norm{\partial^3_t v}_0,  \norm{q}_3,  \norm{\partial_t q}_{2},
 \norm{\partial^2_t q}_1 )
   \label{EQ04}
\end{gather}
for a generic polynomial depending on the norms in \eqref{EQ03}.
Finally, we denote by $\ccP_0$ a generic polynomial of 
$\norm{v_0}_3$ and 
$\norm{v_{03}}_{4,\Ga_1}$, i.e.,
\begin{gather}
\ccPz = P(\norm{v_0}_3, \norm{ v_{03}}_{4,\Ga_1}).
\nonumber
\end{gather}
Throughout the paper,  expressions involving the norms 
in \eqref{EQ03} evaluated at time zero are replaced by
$\ccP_0$. That $\ccP_0$ in fact controls all such terms evaluated at time zero is
shown in Section~\ref{section_time_zero};
cf.~\eqref{estimate_time_zero} below.
\label{notation_polynomial}
\end{notation}

\begin{proof}[Proof of Proposition~\ref{proposition_regularity}]
We would like to apply elliptic estimates to \eqref{Lagrangian_bry_q}. While
we do not know a priori that the coefficients $g_{ij}$
 have enough regularity for an application of standard elliptic estimates, we 
can use improved estimates for coefficients with lower regularity
as in \cite{DongKimEllipticBMOHigerOrder}. For this, it suffices to check that
$g_{ij}$ has small oscillation, in the following sense.
Given $ r>0$ and $x \in \Ga_1$, set 
\begin{gather}
\operatorname{osc}_{x,r} (g^{ij}) = 
\frac{1}{\vol(B_r(x))} \int_{B_r(x)} \Big | g^{ij}(y) -
\frac{1}{\vol(B_r(x)) } \int_{B_r(x)} g^{ij} (z)\, dz \Big | \, dy
\nonumber
\end{gather} 
and 
\begin{gather}
g_R = \sup_{x \in \Ga_1} \sup_{r \leq R} \operatorname{osc}_{x,r} (g^{ij}) .
\nonumber
\end{gather}
We need to verify that there exists  $\widetilde{R} \leq 1$ such that 
\begin{gather}
g_{\widetilde{R}}  \leq \rho,
\label{oscillation_condition}
\end{gather}
where $\rho$ is sufficiently small.

Since $g^{ij} \in H^{1.5}(\Ga_1)$, we have  $g^{ij} \in C^{0,\al}(\Ga_1)$ with 
$0< \al < 0.5$ fixed. Thus, for $y \in B_r(x)$,
\begin{align}
\begin{split}
\Big | g^{ij}(y) -
\frac{1}{\vol(B_r(x)) } \int_{B_r(x)} g^{ij} (z)\, dz \Big |
= &
\Big | 
\frac{1}{\vol(B_r(x)) } \int_{B_r(x)} ( g^{ij}(y) -
 g^{ij} (z) )\, dz \Big |
 \\
 \leq &
 \sup_{y,z \in \Ga_1}
 |g^{ij}(y) -
 g^{ij} (z) |
 \leq 
C_{\alpha} r^\al.
\end{split}
\nonumber
\end{align}
Hence,
\begin{gather}
g_{\widetilde{R}} \leq C_{\alpha} \widetilde R^\al,
\nonumber
\end{gather}
and we can ensure \eqref{oscillation_condition}. Therefore, the results of 
\cite{DongKimEllipticBMOHigerOrder} imply that
\begin{align}
\begin{split}
\norm{ \eta_\al }_{3.5,\Ga_1} 
\leq & C( \norm{a_{\mu \al} N_\mu q }_{1.5,\Ga_1}
+ \norm{\eta_{\alpha}}_{1.5,\Ga_1} ) 
\leq
C (\norm{a}_{1.5,\Ga_1} \norm{q}_{1.5,\Ga_1} + \norm{\eta}_{1.5,\Ga_1} ),
\end{split}
\nonumber
\end{align}
where $C$ depends on $\norm{ g_{ij} }_{1.5,\Ga_1}$. Or yet,
\begin{align}
\begin{split}
\norm{ \eta_\al }_{3.5,\Ga_1} 
\leq & C  \norm{ q }_{1.5,\Ga_1}
+ C\norm{\eta}_3 
\leq   C  \norm{q}_{1.5,\Ga_1} + C
\leq  P(\norm{q}_{1.5,\Ga_1}).
\end{split}
\nonumber
\end{align}

We remark that \cite{DongKimEllipticBMOHigerOrder} 
deals only with Sobolev spaces of integer order, but since the estimates
are linear on the norms we can extend them to fractional order Sobolev spaces as well.
\end{proof}

\section{Energy estimate on the three times differentiated system}
\label{section_L_2_estimate} 



In the next statement, we provide two important bounds: on
the $L^2$ norm of $\partial_{t}^{3}v$
and on the $\dot H^{1}$ norm of the projection of 
$\partial_{t}^2v$.
In the last section, we show that these bounds lead to an estimate
for
$\Vert\partial_{t}^2 v_3\Vert_{1,\Gamma_1}$

\begin{lemma}
\label{L01}
We have
\begin{align}
\begin{split}
\norm{ \partial^3_t v }^2_0 
+  \norm{ \overline{\partial} (\Pi \partial^2_t v)}^2_{0,\Ga_1}
\leq &
 \, 
 \widetilde{\epsilon} ( \norm{\partial^2_t v }_{1.5} + \norm{\partial^2_t q}_1 )
   + 
 P(\norm{v}_{2.5+\de}, \norm{q}_{1.5,\Ga_1})
+
\ccP_0 + \int_0^t \ccP,
\end{split}
\label{partial_3_t_v_estimate}
\end{align}
where $\Pi$ is given by \eqref{projection_identity}.
\end{lemma}


We hereafter adopt the following notations.

\begin{notation}
We  use $\widetilde{\epsilon} $ to denote a small positive constant
which may vary from expression to expression. Typically, $\widetilde{\epsilon}$
comes from choosing the time sufficiently small, from Lemma~\ref{lemma_auxiliary_long},
 or from the 
Cauchy inequality with epsilon. The important point to keep in mind, which 
can be easily verified in the expressions containing $\widetilde{\epsilon}$, 
is that once all estimates
are obtained, we can fix $\widetilde{\epsilon}$ 
to be sufficiently small universal constant in order to close the estimates.
\end{notation} 

\begin{notation}
We use $0 < \de < 0.5$ to denote a number that appears in the Sobolev norms (e.g.~$\norm{v}_{2.5+\de}$). 
The values of $\widetilde{\epsilon}$ and
$\de$ may vary from expression to expression, 
but they can be fixed
to hold uniformly across all expressions at the end. 
In Section~\ref{section_eliminating_lower_order} 
we choose $\de$ appropriately.
\end{notation} 



\begin{proof}[Proof of Lemma~\ref{L01}]
We apply $\partial^3_t$ on \eqref{Lagrangian_free_Euler_eq}, contract the resulting equation with $\partial^3_t v_\al$,
and integrate in space and time to find
\begin{align}
\frac{1}{2} \norm{ \partial^3_t v }^2_0 = \frac{1}{2} \norm{\partial^3_t v(0)}^2_0
-\int_0^t \int_\Om \partial^3_t \partial_\mu( a_{\mu \al} q ) \partial^3_t v_\al,
\label{energy_identity_L_2}
\end{align}
where we also used \eqref{div_identity}. Integrating by parts and using that on $\Ga_0$ we have
\begin{align}
\begin{split}
v_3 = 0, \, \text{ so that } \partial_i v_3 = 0 \, \text{ and } \, \partial^3_t v_3 = 0, 
\\
\text{ and } \, \partial_i \eta_3 = 0, \, \text{ so that } \, a_{31} = a_{32} = 0
,
\end{split}
\label{conditions_Ga_0}
\end{align}
we find 
\begin{gather}
 -\int_0^t \int_\Om \partial^3_t \partial_\mu( a_{\mu \al} q ) \partial^3_t v_\al 
= I_1 + I_2,
\label{I_def}
\end{gather}
where 
\begin{align} 
\begin{split}
I_1 
  & = 
  - \int_0^t \int_{\Ga_1} \partial^3_t ( a_{3 \al} q ) \partial^3_t v_\al 
= \si \int_0^t \int_{\Ga_1} \partial^3_t(\sqrt{g} \Delta_g \eta_\al ) \partial^3_t v_\al
\end{split}
\label{I_1_def}
\end{align}
and 
\begin{align}
\begin{split}
I_2 
=  & \, \int_0^t \int_\Om a_{\mu\al} \partial^3_t q \partial_\mu \partial^3_t v_\al
+ 3 \int_0^t \int_\Om \partial_t a_{\mu\al} \partial^2_t q \partial_\mu \partial^3_t v_\al 
\\
& 
+ 3 \int_0^t \int_\Om \partial^2_t a_{\mu \al} \partial_t q \partial_\mu \partial^3_t v_\al
+ \int_0^t \int_\Om \partial^3_t a_{\mu \al} q \partial_\mu \partial^3_t v_\al 
=  I_{21} + I_{22} + I_{23} + I_{24}.
\end{split}
\label{I_2_def}
\end{align}
Note that we used \eqref{aT_identity}  and  \eqref{Lagrangian_bry_q} to rewrite $I_1$.

\emph{Estimate of $I_1$:}
We first use \eqref{partial_Laplacian_eta_identity}
with $\overline{\partial}_A = \partial_t$ and integrate by parts to find
\begin{align}
\begin{split}
\frac{1}{\si} I_1
&=
\int_0^t \int_{\Ga_1} \partial^3_t(\sqrt{g} \Delta_g \eta_\al ) \partial^3_t v_\al 
\\&
 = 
-\int_0^t \int_{\Ga_1}
\sqrt{g} g^{ij} (\de_{\al\la} -g^{kl} \partial_k \eta_\al \partial_l \eta_\la)
\partial^2_t \partial_j v_\la \partial^3_t \partial_i v_\al
\\
&\indeq
-
\int_0^t \int_{\Ga_1}
 \sqrt{g}(g^{ij} g^{kl} - g^{lj}g^{ik} ) \partial_j \eta_\al \partial_k\eta_\la 
\partial^2_t \partial_l v_\la \partial^3_t \partial_i v_\al
\\&\indeq
- 2\int_0^t \int_{\Ga_1}
\partial_t( \sqrt{g} g^{ij} (\de_{\al\la} -g^{kl} \partial_k \eta_\al \partial_l \eta_\la) )
\partial_t \partial_j v_\la \partial^3_t \partial_i v_\al
\\&\indeq
-2
\int_0^t \int_{\Ga_1}
 \partial_t( \sqrt{g}(g^{ij} g^{kl} - g^{lj}g^{ik} ) \partial_j \eta_\al \partial_k\eta_\la )
\partial_t \partial_l v_\la \partial^3_t \partial_i v_\al 
\\&\indeq
- \int_0^t \int_{\Ga_1}
\partial^2_t( \sqrt{g} g^{ij} (\de_{\al\la} -g^{kl} \partial_k \eta_\al \partial_l \eta_\la) )
\partial_j v_\la \partial^3_t \partial_i v_\al
\\&\indeq
-
\int_0^t \int_{\Ga_1}
 \partial^2_t( \sqrt{g}(g^{ij} g^{kl} - g^{lj}g^{ik} ) \partial_j \eta_\al \partial_k\eta_\la )
\partial_l v_\la \partial^3_t \partial_i v_\al 
\\
= &  \, I_{11} + I_{12} + I_{13} + I_{14} + I_{15} + I_{16}. 
\end{split}
\label{I_1_break_up}
\end{align}
The main terms are the first two, 
$I_{11}$ and $I_{12}$; the last four terms are treated similarly
to each other using
integration by parts in time.

We start with
$I_{11}$.
Recalling \eqref{projection_identity}, we have
\begin{align}
\begin{split}
I_{11} & =
- \frac{1}{2}\int_0^t \int_{\Ga_1}
\sqrt{g} g^{ij}\Pi_{\al\la} \partial_t (\partial^2_t \partial_j v_\la \partial^2_t \partial_i v_\al)
\\
& = 
-\frac{1}{2} \int_{\Ga_1} 
\sqrt{g} g^{ij}\Pi_{\al\la} \partial^2_t \partial_j v_\la \partial^2_t \partial_i v_\al
+ I_{11,0} 
 + \frac{1}{2}
\int_0^t \int_{\Ga_1}
\partial_t(\sqrt{g} g^{ij}\Pi_{\al\la} ) \partial^2_t \partial_j v_\la \partial^2_t \partial_i v_\al,
\end{split}
\nonumber
\end{align}
where we used the symmetries of $g^{-1}$ and  $\Pi$ in the first equality, integrated
by parts in time in the second equality, and denoted
\begin{align}
\begin{split}
I_{11,0}
&
=
\left.
\frac{1}{2} \int_{\Ga_1} 
\sqrt{g} g^{ij}\Pi_{\al\la} \partial^2_t \partial_j v_\la \partial^2_t \partial_i v_\al
\right|_{t=0}.
\end{split}
\nonumber
\end{align}
Using \eqref{projections_contraction} 
to separate
$\Pi_{\alpha\lambda}=\Pi_{\alpha\mu}\Pi_{\mu\lambda}$
and writing 
$
\Pi \partial^2_t \partial_i v = \partial_i(\Pi \partial^2_t v) - \partial_i\Pi \partial^2_t v
$,
we obtain
\begin{align}
\begin{split}
I_{11}  = & \,
-\frac{1}{2} \int_{\Ga_1} 
\sqrt{g} g^{ij}\partial_i (\Pi_{\al\mu} \partial^2_t v_\al) \partial_j (\Pi_{\mu\la}  \partial^2_tv_\la)
+ \int_{\Ga_1} \sqrt{g} g^{ij} \partial_i \Pi_{\al\mu} \partial^2_t v_\al \partial_j
(\Pi_{\mu\la} \partial^2_t v_\la )
\\
& -\frac{1}{2} \int_{\Ga_1} \sqrt{g} g^{ij}\partial_i \Pi_{\al\mu} \partial_j \Pi_{\mu\la} \partial^2_t v_\al 
\partial^2_t v_\la 
 + \frac{1}{2}
\int_0^t \int_{\Ga_1}
\partial_t(\sqrt{g} g^{ij}\Pi_{\al\la}) \partial^2_t \partial_j v_\la \partial^2_t \partial_i v_\al 
+  I_{11,0} 
\\
= & \, I_{111} + I_{112} + I_{113} + I_{114} + I_{11,0}.
\end{split}
\nonumber
\end{align}
We proceed to estimate each term. First,
\begin{align}
\begin{split}
 I_{111} & = 
 -\frac{1}{2} \int_{\Ga_1} 
 \de^{ij}\partial_i (\Pi_{\al\mu} \partial^2_t v_\al) \partial_j (\Pi_{\mu\la}  \partial^2_tv_\la)
-\frac{1}{2} \int_{\Ga_1} 
(\sqrt{g} g^{ij} - \de^{ij} )\partial_i (\Pi_{\al\mu} \partial^2_t v_\al) \partial_j (\Pi_{\mu\la}  \partial^2_tv_\la)
\\
& 
= -\frac{1}{2} \norm{ \overline{\partial} (\Pi \partial^2_t v) }^2_{0,\Ga_1}
-\frac{1}{2} \int_{\Ga_1} 
(\sqrt{g} g^{ij} - \de^{ij} )\partial_i (\Pi_{\al\mu} \partial^2_t v_\al) \partial_j (\Pi_{\mu\la}  \partial^2_tv_\la)
\\
& 
\leq
-\frac{1}{2} \norm{ \overline{\partial} (\Pi \partial^2_t v) }^2_{0,\Ga_1}
+ C\norm{ \sqrt{g} g^{-1} - I }_{L^\infty(\Ga_1)} 
\norm{ \overline{\partial} (\Pi \partial^2_t v) }^2_{0,\Ga_1}
\\
&
\leq 
 -\frac{1}{2} \norm{ \overline{\partial} (\Pi \partial^2_t v) }^2_{0,\Ga_1}
+ C\norm{ \sqrt{g} g^{-1} - I }_{1.5,\Ga_1} 
\norm{ \overline{\partial} (\Pi \partial^2_t v) }^2_{0,\Ga_1}.
\end{split}
\nonumber
\end{align}
Writing
$\sqrt{g} g^{-1} - I =
\int_{0}^{t}\partial_{t}(\sqrt{g} g^{-1} - I)$
and estimating the integrand, we realize that if
$T\le 1/C M$, the integral is bounded by $\widetilde{\epsilon}$,
and thus we get
  \begin{equation*}
    I_{111}\ge -\frac14\norm{ \overline{\partial} (\Pi \partial^2_t v) }^2_{0,\Ga_1}   
   .
  \end{equation*}
Next,
\begin{align}
\begin{split}
I_{112} & \leq C \norm{ \sqrt{g} g^{-1} }_{L^\infty(\Ga_1)}
 \norm{ \overline{\partial} \Pi}_{L^\infty(\Ga_1)} \norm{\partial^2_t v}_{0,\Ga_1}
 \norm{\overline{\partial } (\Pi \partial^2_t v)}_{0,\Ga_1}
 \\
 &
 \leq 
 C \norm{ \sqrt{g} g^{-1} }_{1.5,\Ga_1}
 \norm{ \overline{\partial} \Pi}_{1.5,\Ga_1} \norm{\partial^2_t v}_{0,\Ga_1}
 \norm{\overline{\partial } (\Pi \partial^2_t v)}_{0,\Ga_1}
 \\
 &
 \leq P (\norm{q}_{1.5,\Ga_1} ) \norm{ \partial^2_t v}_{0,\Ga_1}
  \norm{\overline{\partial } (\Pi \partial^2_t v)}_{0,\Ga_1}
  \\
  &
  \leq   \widetilde{\epsilon} \norm{\overline{\partial } (\Pi \partial^2_t v)}_{0,\Ga_1}^2
  + C_{\widetilde{\epsilon}}   P(\norm{q}_{1.5,\Ga_1} )
  \norm{\partial^2_t v}_{0,\Ga_1}^2,
\end{split}
\nonumber
\end{align}
where in the third inequality we used Remark~\ref{remark_rational} to get
  \begin{gather}
   \norm{ \sqrt{g} g^{-1} }_{1.5,\Ga_1} \leq C
   \nonumber
  \end{gather}
and 
\begin{gather}
 \norm{ \overline{\partial} \Pi}_{1.5,\Ga_1} \leq C \norm{\overline{\partial} \,\overline{\partial}
 \eta}_{1.5,\Ga_1} \leq C \norm{\eta}_{3.5,\Ga_1}
 \nonumber
\end{gather}
and then invoked Proposition~\ref{proposition_regularity}; also in the fourth line 
we used the $\epsilon$-Cauchy inequality (so $C_{\widetilde{\epsilon}} \rar \infty$
as $\widetilde{\epsilon} \rar 0$).

Again, with the help of Remark~\ref{remark_rational}, we have
\begin{align}
\begin{split}
I_{113}  &\leq C \norm{\overline{\partial} \Pi }_{L^\infty(\Ga_1)}^2 \norm{\partial^2_t v}^2_{0,\Ga_1}
\leq P(\norm{q}_{1.5,\Ga_1}  ) \norm{\partial^2_t v}_{0,\Ga_1}^2
\end{split}
\nonumber
\end{align}
and
\begin{align}
\begin{split}
I_{114} & \leq C \int_0^t \norm{ \partial_t (\sqrt{g} g^{-1} \Pi )}_{L^\infty(\Ga_1)}
\norm{ \overline{\partial} \partial^2_t v}^2_{0,\Ga_1} 
\leq  \int_0^t \ccP,
\end{split}
\nonumber
\end{align}
recalling the notation \eqref{EQ04}.
One also easily obtains
\begin{gather}
I_{11,0} \leq 
P(\norm{ \overline{\partial} \partial^2_t v(0)}_{0,\Ga_1})
=
P(\norm{ \partial^2_t v(0)}_{1,\Ga_1})
\leq\ccPz.
\nonumber
\end{gather}
Hence, choosing $\widetilde{\epsilon}$ sufficiently small 
\begin{align}
\begin{split}
I_{11}  \leq & \, - \frac{1}{C_{11}} \norm{ \overline{\partial} (\Pi \partial^2_t v)}^2_{0,\Ga_1}
+\ccPz
+P(\norm{q}_{1.5,\Ga_1}) \norm{\partial^2_t v}_{0,\Ga_1}^2
+  \int_0^t \ccP,
\end{split}
\label{estimate_I_11_intermediate}
\end{align}
for some positive constant $C_{11}$.
In order to remove the term with $\norm{\partial^2_t v}_{0,\Ga_1}^2$ 
from the right-hand side of \eqref{estimate_I_11_intermediate},
we consider the product
$
\cPt \norm{\partial^2_t v}_{0,\Ga_1}^2$,
where $\cPt = \cPt(t)$ is a polynomial function
in the appropriate norms of $v$ and $q$. Then
by
  \begin{align}
   \begin{split}
      \norm{\partial^2_t v}_{0,\Ga_1}
      \le
      C
      \norm{ \partial^2_t v}_{0}^{1/2}
      \norm{ \partial^2_t v}_{1}^{1/2}
      \leq 
        C 
        \norm{ \partial^2_t v}^{\fractext{2}{3}}_0
        \norm{ \partial^2_t v}^{\fractext{1}{3}}_{1.5}
   \end{split}
   \label{EQ02}
  \end{align}
we get
   \begin{align}
   \begin{split}
     \cPt \norm{ \partial^2_t v}_{0,\Gamma_1}^2 
    &\leq 
     \widetilde{\epsilon} \norm{\partial^2_t v}^2_{1.5} 
      + \cPt^{3} 
      + C  \norm{\partial^2_t v}^4_0
    \\&
    \leq
     \widetilde{\epsilon} \norm{\partial^2_t v}^2_{1.5} 
      + \cPt^{3} 
      + C  \norm{\partial^2_t v_0}^4_0
      + C 
        \left(
          \int_{0}^{t}
           \norm{\partial^3_t v}_0
        \right)^{3}
    \\&
    \leq
     \widetilde{\epsilon} \norm{\partial^2_t v}^2_{1.5} 
      + \cPt^{3} 
      + C  \norm{\partial^2_t v_0}^4_0
      + C T^{2}
          \int_{0}^{t}
           \norm{\partial^3_t v}_0^{3}
   .
   \end{split}
   \nonumber
   \end{align}
By $T\le C$, we conclude
\begin{align}
\begin{split}
\cPt \norm{ \partial^2_t v}_{1}^2 \leq   & \,
\widetilde{\epsilon} \norm{\partial^2_t v}_{1.5}^2 
+ \cPt^3 
+\ccPz
+ \int_0^t \ccP.
\end{split}
\label{estimate_partial_2_t_v_lower_order}
\end{align}
Since it is needed below, we note that similar arguments give
\begin{gather}
\cPt \norm{\partial_t v}^2_{1.5} \leq \widetilde{\epsilon} \norm{\partial_t v}_{2.5}^2 
+ \cPt^a 
+ \ccPz
+   \int_0^t \ccP,
\label{estimate_partial_t_v_lower_order}
\end{gather}
where $a  \geq 1$. We also observe that since 
the power $a$ comes from an application of the 
Young inequality (e.g.~$a=3$ in
\eqref{estimate_partial_2_t_v_lower_order}
above), 
we can 
choose  it so that $\cPt^a$ is a polynomial if $\cPt$ is. Finally, we remark that 
we can obtain similar estimates for other lower order norms  of $v$ and $q$, (e.g.~$\norm{\partial_t v}_2$ or $\norm{\partial_t q}_1$).

Using \eqref{estimate_partial_2_t_v_lower_order}, 
the inequality \eqref{estimate_I_11_intermediate} becomes
\begin{align}
\begin{split}
I_{11}  \leq  & \, - \frac{1}{C_{11}} \norm{ \overline{\partial} (\Pi \partial^2_t v)}^2_{0,\Ga_1}
+ \widetilde{\epsilon} \norm{ \partial^2_t v}_{1.5}^2 
+P(\norm{  \partial^2_t v(0)}_{1.5})
+P(\norm{q}_{1.5,\Ga_1}) 
+   \int_0^t \ccP
.
\end{split}
\label{estimate_I_11}
\end{align}


The term $I_{12}$ is estimated with a trick used 
in 
\cite[p.~868]{CoutandShkollerFreeBoundary}, where the authors observed that 
we may write
\begin{align}
\begin{split}
I_{12} & = \int_0^t \int_{\Gamma_1}\frac{1}{\sqrt{g}} (\partial_t \det A^1 - \det A^2 - \det A^3 ),
\end{split}
\label{I_12_trick}
\end{align}
where
\begin{align}
\begin{split}
A^1 = 
\begin{bmatrix}
\partial_1 \eta_\mu \partial^2_t \partial_1 v_\mu
&
\partial_1 \eta_\mu \partial^2_t \partial_2 v_\mu
\\
\partial_2 \eta_\mu \partial^2_t \partial_1 v_\mu
&
\partial_2 \eta_\mu \partial^2_t \partial_2 v_\mu
\end{bmatrix},
A^2 = 
\begin{bmatrix}
\partial_1 v_\mu \partial^2_t \partial_1 v_\mu
&
\partial_1 \eta_\mu \partial^2_t \partial_2 v_\mu
\\
\partial_2 v_\mu \partial^2_t \partial_1 v_\mu
&
\partial_2 \eta_\mu \partial^2_t \partial_2 v_\mu
\end{bmatrix},
A^3 = 
\begin{bmatrix}
\partial_1 \eta_\mu \partial^2_t \partial_1 v_\mu
&
\partial_1 v_\mu \partial^2_t \partial_2 v_\mu
\\
\partial_2 \eta_\mu \partial^2_t \partial_1 v_\mu
&
\partial_2 v_\mu \partial^2_t \partial_2 v_\mu
\end{bmatrix}.
\end{split}
\nonumber
\end{align}
Integrating by parts in time in the first term in \eqref{I_12_trick},
we get
\begin{align}
\begin{split}
I_{12} & = \int_{\Ga_1} \frac{1}{\sqrt{g}} \det A^1 
- \left.  \int_{\Ga_1} \frac{1}{\sqrt{g}} \det A^1 \right|_{t=0}
- \int_0^t \int_{\Ga_1} \partial_t \left(\frac{1}{\sqrt{g}}\right) \det A^1 
- \int_0^t \int_{\Ga_1} \frac{1}{\sqrt{g}}( \det A^2 + \det A^3)
\\
&
= I_{121} + I_{12,0} + I_{122} + I_{123}.
\end{split}
\label{I_12_break_up}
\end{align}
First, 
\begin{align}
\begin{split}
\int_{\Ga_1} \frac{1}{\sqrt{g}} \det A^1
& = \int_{\Ga_1} Q^i_{\mu \la}
              (\overline{\partial}{} \eta,\overline{\partial}{}^2 \eta) \partial^2_t v_\mu \partial_i \partial^2_t 
v_\la
.
\end{split}
\label{calculation_det}
\end{align}
To see why \eqref{calculation_det} holds, we compute the determinant to find
\begin{align}
\begin{split}
 \int_{\Ga_1} \frac{1}{\sqrt{g}} \det A^1 = &
  \int_{\Ga_1} \frac{1}{\sqrt{g}} 
( \partial_1 \eta_\mu  
\partial_2 \eta_\la 
\partial_1\partial^2_t v_\mu 
  \partial_2 \partial^2_t  v_\la
-
\partial_1 \eta_\mu
\partial_2 \eta_\la  
\partial_2 \partial^2_t  v_\mu
 \partial_1  \partial^2_t  v_\la
).
\end{split}
\nonumber
\end{align}
Integrating by 
parts the $\partial_1$ derivative 
in the factor $\partial_1\partial^2_t v_\mu $, and 
the $\partial_2$ derivative in 
$\partial_2 \partial^2_t  v_\mu$ produces \eqref{calculation_det} since
the terms with four derivatives cancel out.
Thus we obtain
\begin{align}
\begin{split}
I_{121} 
& 
\leq  P( \norm{\overline{\partial}{}^2 \eta}_{L^\infty(\Ga_1)} ) 
\norm{\partial^2_t v}_{0,\Ga_1} \norm{ \overline{\partial} \partial^2_t v}_{0,\Ga_1}
\leq   P( \norm{\eta}_{3.5,\Ga_1} ) 
\norm{\partial^2_t v}_{0,\Ga_1} \norm{ \overline{\partial} \partial^2_t v}_{0,\Ga_1}
 \\
 & \leq 
  \widetilde{\epsilon}  \norm{ \overline{\partial} \partial^2_t v}_{0,\Ga_1}^2
 +  
    C_{\widetilde{\epsilon}}
    P(\norm{q}_{1.5,\Ga_1} )
    \norm{\partial^2_t v}_{0,\Ga_1}^2,
\end{split}
\nonumber
\end{align}
where we used Proposition~\ref{proposition_regularity}. 
Next,
\begin{align}
\begin{split}
I_{122} & = 
- \int_0^t \int_{\Ga_1} \partial_t \left(\frac{1}{\sqrt{g}}\right)
(\partial_1 \eta_\mu \partial^2_t \partial_1 v_\mu
\partial_2 \eta_\la \partial^2_t \partial_2 v_\la
- 
\partial_2 \eta_\mu \partial^2_t \partial_1 v_\mu
\partial_1 \eta_\la \partial^2_t \partial_2 v_\la )
\\
& 
\leq C \int_0^t \nnorm{\partial_t \left(\frac{1}{\sqrt{g}}\right) }_{L^\infty(\Ga_1)}
\norm{ \overline{\partial} \eta }_{L^\infty(\Ga_1)}^2
\norm{ \overline{\partial} \partial^2_t v }^2_{0,\Ga_1}
\leq
\int_{0}^{t}\ccP,
\end{split}
\nonumber
\end{align}
where we used \eqref{rational_estimate}
to estimate
\begin{gather}
\nnorm{\partial_t \left(\frac{1}{\sqrt{g}}\right) }_{1.5,\Ga_1} \leq 
C
\norm{\partial_t \overline{\partial} \eta }_{1.5,\Ga_1}
\leq 
C\norm{v}_3.
\nonumber
\end{gather}
Finally,
\begin{align}
\begin{split}
\int_0^t \int_{\Ga_1} \frac{1}{\sqrt{g}} \det A^2
& = 
\int_0^t \int_{\Ga_1} \frac{1}{\sqrt{g}}
( \partial_1 v_\mu \partial^2_t \partial_1 v_\mu
\partial_2 \eta_\la \partial^2_t \partial_2 v_\la
-
\partial_2 v_\mu \partial^2_t \partial_1 v_\mu 
\partial_1 \eta_\la \partial^2_t \partial_2 v_\la )
\\
&
\leq C \int_0^t \nnorm{\frac{1}{\sqrt{g}} }_{L^\infty(\Ga_1)}
\norm{ \overline{\partial} v }_{L^\infty(\Ga_1)}^2
\norm{ \overline{\partial} \partial^2_t v }^2_{0,\Ga_1}
\le
\int_{0}^{t}\ccP
\end{split}
\nonumber
\end{align}
by \eqref{rational_estimate_2}. The term with $\det A^3$ is estimated
the same way,  and thus
$
I_{123} \leq  \int_0^t \ccP.
$
Since also
$
I_{12,0} \leq P(\norm{\partial^2_t v(0)}_{1.5})
$,
we conclude that
\begin{align}
\begin{split}
I_{12} \leq &
 \widetilde{\epsilon}  \norm{ \overline{\partial} \partial^2_t v}_{0,\Ga_1}^2 +
 \widetilde{\epsilon} \norm{\partial^2_t v}_{1.5}^2 
 + \ccPz
 +  P(\norm{q}_{1.5,\Ga_1} )
 + \int_0^t \ccP,
 \end{split}
\label{estimate_I_12}
\end{align}
where we used \eqref{estimate_partial_2_t_v_lower_order}.

The terms $I_{13}$ are $I_{14}$ are estimated analogously, so we only
show the details for $I_{13}$.
To treat $I_{13}$ (and $I_{14}$), it suffices
to keep track of the general multiplicative structure of the integrands, and for this
we use the symbolic notation in Remark~\ref{remark_rational} to write
\begin{gather}
I_{13}
= - \int_0^t \int_{\Ga_1} \partial_t Q(\overline{\partial} \eta)
\partial_t \overline{\partial} v \partial^3_t \overline{\partial} v.
\nonumber
\end{gather}
From \eqref{partial_Q_symb} we have
\begin{gather}
\partial^2_t Q(\overline{\partial} \eta) = Q( \overline{\partial} \eta) 
(\partial_t \overline{\partial} \eta)^2 + 
Q( \overline{\partial} \eta) 
\partial^2_t \overline{\partial} \eta
=
Q( \overline{\partial} \eta) 
(\overline{\partial} v)^2 + 
Q( \overline{\partial} \eta) 
\partial_t \overline{\partial} v
,
\label{partial_2_t_Q_symb}
\end{gather}
so that integration by parts in $t$ gives
\begin{align}
\begin{split}
I_{13} = & \, 
-\int_{\Ga_1} \partial_t Q(\overline{\partial}\eta) \partial_t \overline{\partial}v \partial^2_t \overline{\partial}v 
+ I_{13,0}
\\
&
+ \int_0^t \int_{\Ga_1} \partial^2_t Q(\overline{\partial} \eta) \partial_t 
\overline{\partial} v \partial^2_t \overline{\partial} v
+\int_0^t \int_{\Ga_1} \partial_t Q(\overline{\partial} \eta)
\partial^2_t \overline{\partial} v \partial^2_t \overline{\partial} v,
\end{split}
\nonumber
\end{align}
where 
\begin{gather}
I_{13,0} =
\left. 
\int_{\Ga_1} \partial_t Q(\overline{\partial}\eta) \partial_t \overline{\partial}v \partial^2_t \overline{\partial}v 
\right|_{t=0}\leq \ccPz
.
\nonumber
\end{gather}
Thus, recalling \eqref{rational_estimate} and using \eqref{partial_2_t_Q_symb},
\begin{align}
\begin{split}
I_{13} \leq & \, 
C \norm{ \partial_t \overline{\partial} \eta}_{L^\infty(\Ga_1)} 
\norm{\partial_t \overline{\partial} v}_{0,\Ga_1} 
\norm{\partial^2_t \overline{\partial} v}_{0,\Ga_1} 
+ 
\ccPz
+ C \int_0^t  \norm{ \overline{\partial} v}^2_{L^\infty(\Ga_1)} 
\norm{\partial_t \overline{\partial} v}_{0,\Ga_1} 
\norm{\partial^2_t \overline{\partial} v}_{0,\Ga_1}
\\&
+
C\int_0^t \norm{\partial_t \overline{\partial} v}^2_{L^4(\Ga_1)}
\norm{\partial^2_t \overline{\partial} v}_{0,\Ga_1}
+ C\int_0^t \norm{\partial_t \overline{\partial} \eta}_{L^\infty(\Ga_1)} 
\norm{\partial^2_t \overline{\partial} v}_{0,\Ga_1}^2
\\
\leq & \, \widetilde{\epsilon}  \norm{\partial^2_t v}_{1.5}^2
+ 
\ccPz
+ C_{\widetilde{\epsilon}} \norm{\partial_t v}_{1.5}^2\norm{v}^2_{2.5+\de}
+ 
\int_0^t \ccP.
\end{split}
\label{estimate_I_13_14_intermediate}
\end{align}
It follows that the inequality~\eqref{estimate_I_13_14_intermediate} becomes
\begin{align}
\begin{split}
I_{13}%
\leq  & \, 
\, \widetilde{\epsilon}  \norm{\partial^2_t v}_{1.5}^2 
+\ccPz
+ P(\norm{v}_{2.5+\de})
+ 
C\int_0^t (1 + \norm{\partial_t v}_{1.5} ) \norm{\partial_t v}_{1.5} 
\norm{\partial^2_t v}_{1.5} 
+ \int_0^t \ccP.
\end{split}
\label{estimate_I_13_14}
\end{align}
As pointed out above, the same inequality holds for $I_{14}$ as well.

The remaining terms in $I_{1}$, which are 
$I_{15}$ and $I_{16}$, are estimated analogously, so we only
estimate~$I_{15}$.
It again suffices
to keep track of the general multiplicative structure of the integrands and
we use the symbolic notation of Remark~\ref{remark_rational}.
From \eqref{partial_Q_symb} and  \eqref{partial_2_t_Q_symb},
\begin{gather}
\partial^3_t Q(\overline{\partial} \eta) = Q( \overline{\partial} \eta) 
(\partial_t \overline{\partial} \eta)^3 + 
Q(\overline{\partial} \eta) \partial_t \overline{\partial}\eta
\partial^2_t \overline{\partial} \eta 
+ Q( \overline{\partial} \eta) 
\partial^3_t \overline{\partial} \eta
\label{partial_3_t_Q_symb}
\end{gather}
and thus, using integration by parts in time,
\begin{align}
\begin{split}
I_{15}  = & 
I_{15,0}
- \int_{\Ga_1} \partial^2_t Q(\overline{\partial} \eta) \overline{\partial} v
\partial^2_t \overline{\partial} v 
+ \int_0^t \int_{\Ga_1} \partial^3_t Q(\overline{\partial} \eta)
\overline{\partial}v \partial^2_t \overline{\partial} v
+ \int_0^t \int_{\Ga_1} \partial^2_t Q(\overline{\partial} \eta) \partial_t \overline{\partial} v \partial^2_t \overline{\partial} v
\\
= & \, 
I_{15,0}
- \int_{\Ga_1}Q(\overline{\partial} \eta) 
(\partial_t \overline{\partial} \eta)^2
\overline{\partial} v
\partial^2_t \overline{\partial} v  
- \int_{\Ga_1}Q(\overline{\partial} \eta) 
\partial^2_t \overline{\partial} \eta
\overline{\partial} v
\partial^2_t \overline{\partial} v
+ \int_0^t \int_{\Ga_1} Q(\overline{\partial} \eta)  (\partial_t \overline{\partial} \eta)^3
\overline{\partial} v
\partial^2_t \overline{\partial} v
\\
&
+ \int_0^t \int_{\Ga_1} Q(\overline{\partial} \eta) \partial_t \overline{\partial}\eta
\partial^2_t \overline{\partial} \eta
\overline{\partial} v
\partial^2_t \overline{\partial} v
+ \int_0^t \int_{\Ga_1} Q( \overline{\partial} \eta) 
\partial^3_t \overline{\partial} \eta
\overline{\partial} v
\partial^2_t \overline{\partial} v
+
\int_0^t \int_{\Ga_1} \partial^2_t Q(\overline{\partial} \eta) \partial_t \overline{\partial} v \partial^2_t \overline{\partial} v
\\ &\!\!\!
=
I_{15,0}
+ I_{151}
+ I_{152}
+ I_{153}
+ I_{154}
+ I_{155}
+ I_{156},
\end{split}
\nonumber
\end{align}
where we 
set
\begin{gather}
I_{15,0}
=
\left. \int_{\Ga_1} \partial^2_t Q(\overline{\partial} \eta) \overline{\partial} v
\partial^2_t \overline{\partial} v \right|_{t=0}
\leq \ccPz
   .%
\nonumber
\end{gather}
We have
\begin{align}
\begin{split}
I_{151}  & \leq 
C \norm{\overline{\partial} v}^3_{L^\infty(\Ga_1)} 
\norm{\partial^2_t \overline{\partial} v}_{0,\Ga_1} 
\leq \widetilde{\epsilon} \norm{\partial^2_t v}_{1.5}^2 + 
P(\norm{v}_{2.5+\de}),
\end{split}
\nonumber
\end{align}
as well as
\begin{align}
\begin{split}
I_{152}  & \leq 
C \norm{\overline{\partial} \partial_t v}_{0,\Ga_1} 
\norm{\overline{\partial} \partial^2_t v}_{0,\Ga_1} 
\norm{\overline{\partial} v}_{L^\infty(\Ga_1)}
\leq 
\widetilde{\epsilon} \norm{\partial^2_t v}^2_{1.5} 
+ C\norm{\partial_t v}^2_{1.5} 
+ P(\norm{v}_{2.5+\de}),
\end{split}
\nonumber
\end{align}
and
\begin{align}
\begin{split}
I_{153} + I_{154} + I_{155}  & \leq 
\int_0^t\ccP.
\end{split}
\nonumber
\end{align}
The term  $I_{156} $ has already been dealt with in the estimate
of $I_{13}$ and obeys \eqref{estimate_I_13_14}. Thus, from the above we obtain
\begin{align}
\begin{split}
I_{15} \leq &
\,
\widetilde{\epsilon} \norm{\partial^2_t v}^2_{1.5}  
+ \ccPz
+  P(\norm{v}_{2.5+\de})
+ C\norm{\partial_t v}^2_{1.5}  +  \int_0^t \ccP.
\end{split}
\label{estimate_I_14_15_intermediate}
\end{align}
Invoking \eqref{estimate_partial_t_v_lower_order} and \eqref{estimate_I_14_15_intermediate} gives
\begin{align}
\begin{split}
I_{15} + I_{16}  \leq &
\,
\widetilde{\epsilon} \norm{\partial^2_t v}^2_{1.5}  
+ \ccPz
+  P(\norm{v}_{2.5+\de})
 +  \int_0^t \ccP.
\end{split}
\label{estimate_I_14_15}
\end{align}

Combining \eqref{estimate_I_11}, \eqref{estimate_I_12}, \eqref{estimate_I_13_14},
and \eqref{estimate_I_14_15}, and recalling \eqref{I_1_def} and \eqref{I_1_break_up},
we obtain
\begin{align}
\begin{split}
I_1  \leq  & \, 
- \frac{1}{C_{11}} \norm{ \overline{\partial} (\Pi \partial^2_t v)}^2_{0,\Ga_1}
+
\widetilde{\epsilon}  \norm{ \partial^2_t v}_{1.5}^2
+ P(\norm{q}_{1.5,\Ga_1}, \norm{v}_{2.5+\de} )
+\ccPz
+
  \int_0^t \ccP.\end{split}
\label{estimate_I_1}
\end{align}


\emph{Estimate of $I_2$:}
Now, we consider $I_2$, rewritten in \eqref{I_2_def}. 
%
Differentiating \eqref{Lagrangian_free_Euler_div} in time three times gives
\begin{align}
\begin{split}
a_{\mu \al} \partial_\mu \partial^3_t v_\al 
& =
-3 \partial^2_t a_{\mu \al} \partial_\mu \partial_t v_\al
-3 \partial_t a_{\mu \al} \partial_\mu \partial^2_t v_\al 
- \partial^3_t a_{\mu \al} \partial_\mu v_\al,
\end{split}
\nonumber
\end{align}
so that $I_{21}$ becomes
\begin{align}
\begin{split}
I_{21}  &= 
-3 \int_0^t \int_\Om \partial^2_t a_{\mu \al} \partial_\mu \partial_t v_\al \partial^3_t q
- 3\int_0^t \int_\Om \partial_t a_{\mu \al} \partial_\mu \partial^2_t v_\al 
\partial^3_t q
- \int_0^t \int_\Om \partial^3_t a_{\mu \al} \partial_\mu v_\al\partial^3_t q
\\
& 
= I_{211} + I_{212} + I_{213}.
\end{split}
\label{I_21_break_up}
\end{align}
We have
\begin{align}
\begin{split}
I_{211}  = &
 \left. -3 \int_\Om 
\partial^2_t a_{\mu \al} \partial_\mu \partial_t v_\al \partial^2_t q
\right|_0^t 
+ 3 \int_0^t \int_\Om
\partial_t ( 
\partial^2_t a_{\mu \al} \partial_\mu \partial_t v_\al )
\partial^2_t q
\\
\leq  &
C \left. \norm{\partial^2_t a}_0 \norm{\partial \partial_t v}_{L^3(\Om)} 
\norm{\partial^2_t q}_{L^6(\Om)} \right|_0^t
+ C\int_0^t \norm{\partial^3_t a}_{L^3(\Om)}  \norm{\partial \partial_t v}_{L^6(\Om)}
\norm{\partial^2_t q}_{L^2(\Om)}
\\
&
+ C\int_0^t \norm{\partial^2_t a}_{L^6(\Om)} \norm{\partial \partial^2_t v}_{L^3(\Om)}
\norm{\partial^2_t q}_{L^2(\Om)}
\\
 \leq & 
 C \left. (\norm{\partial v}_0 \norm{\partial v}_{L^\infty(\Om)} + \norm{\partial \partial_t v}_0 )
\norm{\partial \partial_t v}_{0.5} \norm{\partial^2_t q}_1 \right|_0^t
\\
& 
+
C\int_0^t (\norm{\partial v}_{L^3(\Om)} \norm{\partial v}^2_{L^\infty(\Om)}
+ \norm{\partial \partial_t v}_{L^3(\Om)} \norm{\partial v}_{L^\infty(\Om)} 
+\norm{\partial \partial^2_t v}_{L^3(\Om)} ) \norm{\partial_t v}_2 \norm{\partial^2_t q}_1
\\
& +
C\int_0^t (\norm{\partial v}_1 \norm{\partial v}_{L^\infty(\Om)} + \norm{\partial \partial_t v}_1 )
\norm{\partial_t^2 v}_{1.5} \norm{\partial^2_t q}_{1}
\\
\leq & \, \widetilde{\epsilon}(  \norm{\partial^2_t q}_1^2 + \norm{\partial_t v}^2_{2.5} )+  P( \norm{v}_{2.5+\de}) 
+\ccPz
+ \int_{0}^{t}\ccP,
\end{split}
\label{estimate_I_211}
\end{align}
where in the last step we used an argument similar to that  for 
\eqref{estimate_partial_t_v_lower_order}.
Next, we integrate by parts and use \eqref{div_identity} to find
\begin{align}
\begin{split}
I_{212} & = 
-3 \int_0^t \int_{\partial \Om} \partial_t a_{3 \al} \partial^2_t v_\al \partial^3_t q N_3
+ 3 \int_0^t \int_\Om \partial_t a_{\mu \al} \partial^2_t v_\al \partial_\mu \partial^3_t q
\\& 
= I_{2121} + I_{2122}.
\end{split}
\label{I_212_break_up}
\end{align}
By \eqref{Lagrangian_free_Euler_a_eq} and \eqref{Lagrangian_bry_v}, we have
\begin{align}
\begin{split}
I_{2121} & = 
3 \int_0^t \int_{\Ga_1} a_{3\ga} \partial_\nu v_\ga a_{\nu \al }\partial^2_t v_\al \partial^3_t q.
\end{split}
\label{I_2121_def}
\end{align}
Since in light of \eqref{Lagrangian_bry_q} and \eqref{aT_identity},
\begin{align}
\begin{split}
a_{3\ga}  \partial^3_t q
& = - \si \partial^3_t(\sqrt{g} \Delta_g \eta_\ga) 
-3 \partial_t a_{3 \ga} \partial^2_t q
-3 \partial^2_t a_{3 \ga} \partial_t q
- \partial^3_t a_{3 \ga} q,
\end{split}
\nonumber
\end{align}
we obtain
\begin{align}
\begin{split}
I_{2121}  = & 
- 3\si 
\int_0^t \int_{\Ga_1}  \partial_\nu v_\ga a_{\nu \al} \partial^2_t v_\al
\partial^3_t(\sqrt{g} \Delta_g \eta_\ga)
-9 \int_0^t \int_{\Ga_1}  \partial_\nu v_\ga a_{\nu \al} \partial^2_t v_\al \partial_t a_{3 \ga} \partial^2_t q
\\
&
-9 \int_0^t \int_{\Ga_1}  \partial_\nu v_\ga a_{\nu \al} \partial^2_t v_\al \partial^2_t a_{3 \ga} \partial_t q
-3\int_0^t \int_{\Ga_1}  \partial_\nu v_\ga a_{\nu \al} \partial^2_t v_\al \partial^3_t a_{3 \ga}q
\\
= & \,
I_{21211} + 
I_{21212} + 
I_{21213} + 
I_{21214}. 
\end{split}
\label{I_2121_break_up}
\end{align}
In order to estimate $I_{21211}$, 
we use \eqref{partial_Laplacian_eta_identity} 
and $\partial_{t}\eta=v$
to write
\begin{align}
\begin{split}
I_{21211}
 = & 
- 3\si \int_0^t \int_{\Ga_1}
\partial_i \big (
\sqrt{g} g^{ij} (\de_{\ga\la} -g^{kl} \partial_k \eta_\ga \partial_l \eta_\la)
\partial^2_t \partial_j v_\la 
\big )
\partial_\nu v_\ga a_{\nu \al} \partial^2_t v_\al
\\
&
-3\si 
\int_0^t \int_{\Ga_1}
\partial_i \big (
 \sqrt{g}(g^{ij} g^{kl} - g^{lj}g^{ik} ) \partial_j \eta_\ga \partial_k\eta_\la 
\partial^2_t \partial_l v_\la 
\big )
\partial_\nu v_\ga a_{\nu \al} \partial^2_t v_\al
\\
&
- 6\si \int_0^t \int_{\Ga_1}
\partial_i \big (
\partial_t( \sqrt{g} g^{ij} (\de_{\ga\la} -g^{kl} \partial_k \eta_\ga \partial_l \eta_\la) )
\partial_t \partial_j v_\la 
\big )
\partial_\nu v_\ga a_{\nu \al} \partial^2_t v_\al
\\
&
-6\si 
\int_0^t \int_{\Ga_1}
\partial_i \big (
 \partial_t( \sqrt{g}(g^{ij} g^{kl} - g^{lj}g^{ik} ) \partial_j \eta_\ga \partial_k\eta_\la )
\partial_t \partial_l v_\la 
\big )
\partial_\nu v_\ga a_{\nu \al} \partial^2_t v_\al
\\
&
- 3\si \int_0^t \int_{\Ga_1}
\partial_i \big (
\partial^2_t( \sqrt{g} g^{ij} (\de_{\ga\la} -g^{kl} \partial_k \eta_\ga \partial_l \eta_\la) )
\partial_j v_\la 
\big )
\partial_\nu v_\ga a_{\nu \al} \partial^2_t v_\al
\\
&
-3\si 
\int_0^t \int_{\Ga_1}
\partial_i \big (
 \partial^2_t( \sqrt{g}(g^{ij} g^{kl} - g^{lj}g^{ik} ) \partial_j \eta_\ga \partial_k\eta_\la )
\partial_l v_\la 
\big )
\partial_\nu v_\ga a_{\nu \al} \partial^2_t v_\al.
\end{split}
\nonumber
\end{align}
In each of the above integrals, we integrate by parts the derivative $\partial_i$.
After this, the sum of the first two integrals has the form
$3 \si \int_0^t \int_{\Ga_1} Q(\overline{\partial}\eta ) \partial^2_t \overline{\partial} v
\overline{\partial}( \overline{\partial} v a \partial^2_t v )
$ (cf.~Remark~\ref{remark_rational}),
the sum of the middle terms is
of the form
$6 \si
\int_0^t \int_{\Ga_1} 
\partial_t Q(\overline{\partial} \eta) \partial_t \overline{\partial} v
\overline{\partial}( \overline{\partial} v a \partial^2_t v )
$,
while the sum of the last two is
$3 \si
\int_0^t \int_{\Ga_1}
\partial^2_t Q(\overline{\partial} \eta ) \overline{\partial} v
\overline{\partial}( \overline{\partial} v a \partial^2_t v )
$.
Each of the three forms is bounded by
$\int_{0}^{t}\ccP$, using H\"older and Sobolev inequalities, and we obtain
\begin{align}
\begin{split}
I_{21211} \leq & 
\int_0^t \ccP.
\end{split}
\label{estimate_I_21211}
\end{align}
The terms $I_{21212}$ and $I_{21213}$ are also bounded using
H\"older and Sobolev inequalities, leading to

\begin{align}
\begin{split}
I_{21212},
I_{21213}\le 
\int_0^t  \ccP
   .
\end{split}
\notag
\end{align}
For instance, we have
\begin{align}
\begin{split}
I_{21213}
=&
-9 \int_0^t \int_{\Ga_1}  \partial_\nu v_\ga a_{\nu \al} \partial^2_t v_\al \partial^2_t a_{3 \ga} \partial_t q
\\
\leq &
C
\int_0^t
 \norm{ \partial v}_{L^6(\Ga_1)}
 \norm{ a}_{L^\infty(\Ga_1)}
  \norm{ \partial^2_t v}_{L^6(\Ga_1)}
 \norm{ \partial^2_t a}_{0,\Ga_1}
  \norm{ \partial_t q}_{L^6(\Ga_1)}   
  \leq 
\int_0^t  \ccP.
\end{split}
\label{estimate_I_21213}
\end{align}
%
%
The term $I_{21214}$ can not be estimated immediately
due to the factor $\partial_{t}^{3}a$.
First, we write
\begin{align}
\begin{split}
I_{21214} = & 
-3\int_0^t \int_{\Ga_1}  \partial_\nu v_\ga a_{\nu \al} \partial^2_t v_\al \partial^3_t a_{3 \ga}  q
=
-3\int_0^t \int_{\Ga_1}  
\partial_\nu v_\ga 
a_{\nu \al} 
\partial^2_t v_\al 
\partial^3_t a_{3 \ga}
 q
\\
\leq &
C \sum_{\ga=1}^3  \int_0^t 
\norm{ \partial v}_{L^\infty(\Ga_1)}
\norm{ a}_{L^\infty(\Ga_1)}
\norm{ \partial^2_t v}_{0,\Ga_1}
\norm{ \partial^3_t a_{3 \ga}}_{0,\Ga_1}
\norm{ q}_{L^\infty(\Ga_1)}
\\
\leq & 
C\sum_{\ga=1}^3 
\int_0^t
\norm{v}_3 
\norm{ \partial^2_t v}_{1.5} 
\norm{ \partial^3_t a_{3 \ga}}_{0,\Ga_1}
\norm{q}_3.
\end{split}
\nonumber
\end{align}
To bound $\partial^3_t a_{3 \ga}$, we use that $a_{3\ga}$ contains
only tangential derivatives of $\eta$, i.e., from \eqref{a_explicit} we see that
\begin{gather}
a_{3\ga} = \epsilon_{\ga \al \be} \partial_1 \eta_\al \partial_2 \eta_\be,
\label{a_3_ga_bry}
\end{gather}
where $\epsilon_{\ga \al \be}$ is the Levi-Civita symbol,
so that 
\begin{align}
\begin{split}
\norm{\partial^3_t a_{3\ga} }_{0,\Ga_1} 
\leq & C \norm{ \overline{\partial} \partial^2_t v \overline{\partial} \eta}_{0,\Ga_1}
+
C \norm{ \overline{\partial} \partial_t v \overline{\partial} v}_{0,\Ga_1}
\leq
P (\norm{\partial^2_t v}_{1.5}, \norm{\partial_t v}_{1.5},\norm{v}_3 ).
\end{split}
\nonumber
\end{align}
Hence,
$
I_{21214} \leq 
\int_0^t \ccP
$, and 
we obtain
\begin{align}
\begin{split}
I_{2121} \leq  \int_0^t \ccP
   .
\end{split}
\label{estimate_I_2121}
\end{align}
The term $I_{2122}$ is estimated 
using integration by parts in $t$
and obeys
\begin{align}
\begin{split}
I_{2122} \leq & 
C \norm{\partial^2_t q}_1 \norm{\partial^2_t v}_{L^3(\Omega)}
\norm{\partial v}_{L^6(\Om)}
+
P(
\norm{\partial^2_t q(0)}_1, \norm{\partial^2_t v(0)}_{1.5}, \norm{v(0)}_3)
+ \int_0^t \ccP 
\\
\leq 
&
C \norm{\partial^2_t q}_1 \norm{\partial^2_t v}_{0.5} \norm{v}_2
+
P(
\norm{\partial^2_t q(0)}_1, \norm{\partial^2_t v(0)}_{1.5}, \norm{v(0)}_3)
+ \int_0^t \ccP
\\
\leq &
\,
\widetilde{\epsilon} ( \norm{\partial^2_t q}^2_1 
+ \norm{\partial^2_t v}^2_{1.5} ) 
+
\ccPz
+ P(\norm{v}_{2}) 
+ \int_0^t \ccP,
\end{split}
\label{estimate_I_2122}
\end{align}
where in the last line we estimated similarly to \eqref{estimate_partial_2_t_v_lower_order} and 
\eqref{estimate_partial_t_v_lower_order}.

Combining \eqref{I_212_break_up}, \eqref{estimate_I_2121},
and \eqref{estimate_I_2122}, we obtain 
\begin{align}
\begin{split}
I_{212} \leq &
\,
\widetilde{\epsilon}( \norm{\partial^2_t q}^2_1 
+ \norm{\partial^2_t v}^2_{1.5} )
+
\ccPz
+ P(\norm{v}_{2}) 
+ \int_0^t \ccP.
\end{split}
\label{estimate_I_212} 
\end{align}
Next we estimate $I_{213}$. Using \eqref{Lagrangian_free_Euler_a_eq}, we find
\begin{align}
\begin{split}
  I_{213}
=
- \int_0^t \int_\Om \partial^3_t a_{\mu \al} \partial_\mu v_\al\partial^3_t q
= & 
\int_0^t \int_\Om   a_{\mu \ga} \partial^2_t \partial_\be v_\ga a_{\be \al}  \partial_\mu v_\al\partial^3_t q
+ R,
\end{split}
\nonumber
\end{align}
where $R$ 
represents the sum of the lower order terms, which
are
easily
estimated as
\begin{align}
\begin{split}
R \leq & 
\,
\widetilde{\epsilon} (\norm{ \partial^2_t q}^2_1 + \norm{ \partial^2_t v}^2_{1.5} ) 
+
\ccPz
+ P(\norm{v}_{2.5+\de}) + \int_0^t \ccP.
\end{split}
\label{R_term_as_paper}
\end{align}
Integrating by parts the derivative $\partial_\be$, we have
\begin{align}
\begin{split}
\int_0^t \int_\Om   a_{\mu \ga} \partial^2_t \partial_\be v_\ga a_{\be \al}  \partial_\mu v_\al\partial^3_t q 
= &
- \int_0^t \int_\Om   a_{\mu \ga} \partial^2_t  v_\ga a_{\be \al}  \partial_\be\partial_\mu v_\al\partial^3_t q 
- \int_0^t \int_\Om   a_{\mu \ga} \partial^2_t  v_\ga a_{\be \al} \partial_\mu v_\al
 \partial_\be\partial^3_t q
 \\
 & 
 - \int_0^t \int_\Om    \partial_\be a_{\mu \ga} \partial^2_t  v_\ga a_{\be \al} \partial_\mu v_\al
\partial^3_t q
+ \int_0^t \int_{\Ga_1}   a_{\mu \ga} \partial^2_t  v_\ga a_{3 \al}  \partial_\mu v_\al\partial^3_t q,
\end{split}
\nonumber
\end{align}
where we used \eqref{div_identity} and  \eqref{Lagrangian_bry_v}.
The first three integrals on the right-hand side 
are estimated using a simple integration by parts in time
and are bounded by
\begin{align}
\begin{split}
& \widetilde{\epsilon}( \norm{\partial^2_t q}_1 + \norm{\partial^2_t v}^2_{1.5} )  
+\ccPz
+ P(\norm{v}_{2.5+\de})
+ \int_0^t \ccP,
\end{split}
\label{first_three_terms_as_in_paper}
\end{align}
whereas the last term is estimated as 
$I_{2121}$ (see \eqref{I_2121_def} and \eqref{I_2121_break_up}). 
Thus from \eqref{R_term_as_paper} and \eqref{first_three_terms_as_in_paper} we have
\begin{align}
\begin{split}
I_{213} \leq & \,
\widetilde{\epsilon}(  \norm{\partial^2_t q}_1 + \norm{\partial^2_t v}^2_{1.5} )  
+\ccPz
+ P(\norm{ v}_{2.5+\de})
+ \int_0^t \ccP.
\end{split}
\label{estimate_I_213}
\end{align}
Combining \eqref{I_21_break_up}, 
 \eqref{estimate_I_211}, \eqref{estimate_I_212}, and
\eqref{estimate_I_213}, we find
\begin{align}
\begin{split}
I_{21} \leq &
\, \widetilde{\epsilon} ( \norm{\partial^2_t q}_1^2 + \norm{ \partial^2_t v}^2_{1.5} )
+\ccPz
 +  P( \norm{v}_{2.5+\de}) 
+ \int_0^t  \ccP.
\end{split}
\label{estimate_I_21}
\end{align}

Next, we treat $I_{22}$, defined in \eqref{I_2_def}, which we write as
\begin{align}
\begin{split}
I_{22} = & \,
 3 \int_0^t \int_\Om \partial_t a_{\mu\al} \partial^2_t q \partial_\mu \partial^3_t v_\al 
 \\
=
& \,
3 \int_0^t \int_{\Ga_1} \partial_t a_{3\al} \partial^2_t q \partial^3_t v_\al 
-
 3 \int_0^t \int_\Om \partial_t a_{\mu\al} \partial_\mu \partial^2_t q \partial^3_t v_\al 
 \\
 =
 &
 -3 \int_0^t \int_{\Ga_1}  a_{3\ga} \partial_\nu v_\ga a_{\nu \al}  \partial^2_t q \partial^3_t v_\al 
-  3 \int_0^t \int_\Om \partial_t a_{\mu\al} \partial_\mu \partial^2_t q \partial^3_t v_\al 
 \\
 = &\,
 3\si  \int_0^t \int_{\Ga_1} \partial_\nu v_\ga a_{\nu \al}  
 \partial^2_t (\sqrt{g} \Delta_g \eta_\ga)  \partial^3_t v_\al 
 + 6 \int_0^t \int_{\Ga_1} \partial_\nu v_\ga a_{\nu \al}   \partial_t a_{3 \ga} \partial_t q  \partial^3_t v_\al 
\\&
+ 3  \int_0^t \int_{\Ga_1} \partial_\nu v_\ga a_{\nu \al}   \partial^2_t a_{3 \ga}  q  \partial^3_t v_\al 
 -  3 \int_0^t \int_\Om \partial_t a_{\mu\al} \partial_\mu \partial^2_t q  \partial^3_t v_\al 
 \\
 = &\,
 I_{221} + I_{222} + I_{223} + I_{224},
\end{split}
\nonumber
\end{align}
where in the second equality we integrated by parts in
$\partial_{\mu}$, in the third equality
we used \eqref{Lagrangian_free_Euler_a_eq}, and in the fourth equality
\eqref{Lagrangian_bry_q}. 
In the second inequality, we also applied
\eqref{conditions_Ga_0}.

To estimate $ I_{221}$ we use the identity \eqref{partial_Laplacian_eta_identity} 
As above, it suffices to consider the general
multiplicative structure of the integrands and thus we write the right-hand side 
of \eqref{partial_Laplacian_eta_identity} as
\begin{gather}
 \overline{\partial} (Q(\overline{\partial} \eta) \partial_t \overline{\partial} \eta ).
\label{partial_Laplacian_symbolic}
\end{gather}
Integrating by parts the $\overline{\partial}$ derivative in 
\eqref{partial_Laplacian_symbolic} and integrating by parts in time one of the derivatives
in the $\partial^3_t v$ term, we find
\begin{align}
\begin{split}
I_{221} = & 
\int_0^t \int_{\Ga_1} 
{\partial}v a 
\partial_t  \overline{\partial} (Q(\overline{\partial} \eta) \partial_t \overline{\partial} \eta )
\partial^3_t v
\\
= &
-\int_0^t \int_{\Ga_1} 
\partial_t  (Q(\overline{\partial} \eta) \partial_t \overline{\partial} \eta )
( \partial^3_t \overline{\partial} v {\partial} v a 
+ 
\partial^3_t v \overline{\partial} ( {\partial} v a ) )
\\
= &
-\left.
\int_{\Ga_1} 
\partial_t  (Q(\overline{\partial} \eta) \partial_t \overline{\partial} \eta )
\partial^2_t \overline{\partial} v {\partial} v a 
\right|_0^t
- \left.
\int_{\Ga_1} 
 \partial_t  (Q(\overline{\partial} \eta) \partial_t \overline{\partial} \eta )
\partial^2_t v \overline{\partial} ( {\partial} v a ) 
\right|_0^t
\\
&+
\int_0^t \int_{\Ga_1} 
 \partial^2_t  (Q(\overline{\partial} \eta) \partial_t \overline{\partial} \eta )
 \partial^2_t \overline{\partial} v a {\partial} v
 +
 \int_0^t \int_{\Ga_1} 
 \partial_t  (Q(\overline{\partial} \eta) \partial_t \overline{\partial} \eta )
 \partial^2_t \overline{\partial} v \partial_t (a {\partial} v)
 \\
 & 
 +
 \int_0^t \int_{\Ga_1} 
 \partial^2_t  (Q(\overline{\partial} \eta) \partial_t \overline{\partial} \eta )
 \partial^2_t  v  \overline{\partial}( a {\partial} v)
 +
 \int_0^t \int_{\Ga_1} 
 \partial_t  (Q(\overline{\partial} \eta) \partial_t \overline{\partial} \eta )
 \partial^2_t  v \partial_t \overline{\partial} (a {\partial} v)
   .
\end{split}
\nonumber
\end{align}
The estimate of these terms is done in a similar way as other boundary
integrals handled above. 
For example,
the first of the two pointwise terms above equals
\begin{align}
\begin{split}
&
-\left.
\int_{\Ga_1} 
Q(\overline{\partial} \eta) \partial^2_t \overline{\partial} \eta 
\partial^2_t \overline{\partial} v {\partial} v a 
\right|_0^t
-\left.
\int_{\Ga_1} 
Q(\overline{\partial} \eta) (\partial_t \overline{\partial} \eta )^2
\partial^2_t \overline{\partial} v {\partial} v a 
\right|_0^t
\\
&
\indeq
= 
-\left.
\int_{\Ga_1} 
Q(\overline{\partial} \eta) \partial_t \overline{\partial} v
\partial^2_t \overline{\partial} v {\partial} v a 
\right|_0^t
-\left.
\int_{\Ga_1} 
Q(\overline{\partial} \eta) (\overline{\partial} v )^2
\partial^2_t \overline{\partial} v {\partial} v a 
\right|_0^t
\\
&
\indeq\leq 
\, 
\widetilde{\epsilon} \norm{\partial^2_t v }^2_{1.5} 
+\ccPz
+ P(\norm{v}_{2.5+\de})
+
\int_0^t 
P ( \norm{v}_3, \norm{\partial^2_t v}_{1.5} ).
\end{split}
\nonumber
\end{align}
Other terms
are handled similarly; we do not present
the estimates here as they consist of a repetition of ideas used above. We obtain
\begin{align}
\begin{split}
I_{221} \leq & \,
\widetilde{\epsilon} \norm{\partial^2_t v }^2_{1.5} 
+ 
\ccPz
+
P( \norm{v}_{2.5+\de})
+
\int_0^t \ccP.
\end{split}
\nonumber
\end{align}
The terms $I_{222}$, $I_{223}$, and $I_{224}$ are also bounded by essentially
a repetition of the ideas presented so far and, thus, we again omit the details. 
We point out that a term in $\partial^3_t a$ appears in $I_{223}$ after integration
by parts in time. But this term is then handled as in $I_{21214}$, with the help
of  \eqref{a_3_ga_bry}. The final estimate for $I_{22}$ is
\begin{align}
\begin{split}
I_{22} \leq &
\, 
\widetilde{\epsilon} ( \norm{\partial^2_t v }^2_{1.5} + \norm{\partial^2_t q}^2_1 )
+ P(  \norm{v}_{2.5+\de})
+ \ccPz
+
\int_0^t \ccP
   .
\end{split}
\label{estimate_I_22}
\end{align}
Finally, the 
terms $I_{23}$ and $I_{24}$ 
are treated
similarly to the terms $I_{21}$
with the corresponding estimates similar to the ones that have already been performed
above. The final estimate
for  $I_{23}+I_{24}$ reads
\begin{align}
\begin{split}
I_{23} + I_{24} \leq &  \,
\widetilde{\epsilon} (\norm{\partial^2_t v}^2_{1.5}  + \norm{\partial^2_t q}^2_1   ) 
 + P(\norm{v}_{2.5+\de}) 
 +\ccPz
 + 
\int_0^t \ccP.
\end{split}
\label{estimate_I_23_24}
\end{align}
Collecting all the bounds above then concludes the proof of the
lemma.
\end{proof}

\section{Estimates at $t=0$\label{section_time_zero}}
The above estimates involve several quantities evaluated at time zero. In this section we show
that all such quantities may be estimated in terms of the initial data.


From \eqref{free_Euler_system} we have that $p$ satisfies
\begin{subequations}{}
\begin{alignat}{5}
\Delta p &&\, = \,& \, -\dive (\nabla_u u)  &&  \hspace{0.25cm}   \text{ in } && \ccD,
\nonumber
 \\
\frac{\partial p}{\partial 3} && \, = \, & \, 0
\nonumber
\\
p  && \, = \,& \, \si \cH  &&  \hspace{0.25cm}  \text{ on } && \, \Ga_1(t)
,
\nonumber
\end{alignat}
\end{subequations}
where $\Ga_1(t) = \eta(\Ga_1)$.
Evaluating at $t=0$ 
and denoting $q_0=q(0)$
gives
\begin{subequations}{}
\begin{alignat}{5}
\Delta q_0 &&\, = \,& \, -\partial_\al v_{0\be} \partial_\be v_{0\al}  &&  \hspace{0.25cm}   \text{ in } && \Om_0,
\nonumber
 \\
\partial_{3}p
 && \, 
= \, & \, 0 && \hspace{0.25cm}  \text{ on }&& \,  \Ga_0, 
\nonumber
\\
p  && \, = \,& \, 0 &&  \hspace{0.25cm}  \text{ on } && \, \Ga_1, 
\nonumber
\end{alignat}
\end{subequations}
where we used 
$\cH(0) = 0$ on $\Ga_1$. We thus obtain the estimate
\begin{gather}
\norm{ q_0 }_4 \leq C \norm{v_0}_3^2.
\nonumber
\end{gather}
Evaluating \eqref{Lagrangian_free_Euler_eq} at $t=0$ and using $a(0)=  I$
produces
\begin{gather}
\norm{ \partial_t v(0) }_3 \leq  C\norm{q_0}_4 \leq C \norm{v_0}_3^2.
\nonumber
\end{gather}
From  \eqref{Lagrangian_free_Euler_a_eq} and its time derivative we also obtain
\begin{gather}
\norm{ \partial_t a(0) }_2 \leq C \norm{ v_0 }_3
\nonumber
\end{gather}
and
\begin{gather}
\norm{ \partial^2_t a(0) }_2 \leq P( \norm{ v_0 }_3 ),
\nonumber
\end{gather}
after using the estimate for $\partial_t v(0)$.

From \eqref{press_estimate_eq} and \eqref{press_estimate_bry}, and
also using \eqref{div_identity} and \eqref{Lagrangian_bry_q},  we may write
\begin{subequations}{}
\begin{alignat}{5}
\partial_\mu( a_{\mu\nu} a_{\la \nu} \partial_\la q ) &&\, = \,& \,  \partial_t a_{\mu\la} \partial_\mu v_\la  &&  \hspace{0.25cm}   \text{ in } && \Om_0,
\nonumber
 \\
a_{3\nu} a_{\la \nu} \partial_\la q   
&& \, = \, & \, - a_{3 \nu} \partial_t v_\nu && \hspace{0.25cm}  \text{ on }&& \,  \Ga_0, 
\nonumber
\\
a_{3 \al} q   && \, 
= \,& \,  -\si \sqrt{g} \Delta_g \eta_\al  &&  \hspace{0.25cm}  \text{ on } && \, \Ga_1.
\nonumber
\end{alignat}
\end{subequations}
Setting $\al = 3$ in the boundary condition on $\Ga_1$, differentiating 
in time, and evaluating at $t=0$ yields
\begin{subequations}{}
\begin{alignat}{5}
\Delta \partial_t q(0) &&\, = \,& \,  
( -\partial_t a_{\al \mu} \partial^2_{\mu \al } q
- \partial_t \partial_\al a_{\be \al} \partial_\be q - \partial_t a_{\be\mu} \partial^2_{\mu \be} q 
 &&   && 
\nonumber
\\
&&\,  \,& \,  
+ \partial^2_t a_{\al\be} \partial_\al v_\be + \partial_t a_{\al\be} \partial_\al \partial_t
v_\be )\Big|_{t=0}
 &&  \hspace{0.25cm}   \text{ in } && \Om_0,
\nonumber
 \\
\partial_{3} \partial_t q(0)  && \, = \, & \, 
 - \partial_t a_{3\be} \partial_\be q \Big|_{t=0}
&& \hspace{0.25cm}  \text{ on }&& \,  \Ga_0, 
\nonumber
\\
\partial_t q(0) && \, = \,& \, 
- \partial_t a_{3 \al} q  \Big|_{t=0} 
- \si  \overline{\Delta} v_{03}
 &&  \hspace{0.25cm}  \text{ on } && \, \Ga_1, 
\nonumber
\end{alignat}
\end{subequations}
where we used $a(0) = I$, \eqref{div_identity}, and \eqref{conditions_Ga_0}.
Above, $\overline{\Delta}$ is the flat Laplacian on the boundary.
Using the estimates we have already obtained at $t=0$ and invoking the
elliptic
theory, we obtain
\begin{gather}
\norm{ \partial_t q(0) }_{2.5} \leq  P( \norm{ v_0}_3, \norm{ v_{03} }_{4,\Ga_1} )
\nonumber
\end{gather}
and thus
$\norm{ \partial_{tt} v(0) }_{1.5} \leq  P( \norm{ v_0}_3, \norm{
v_{03} }_{4,\Ga_1} )$
by differentiating
\eqref{Lagrangian_free_Euler_eq}
and evaluating at $t=0$.
Observe that if the initial data is irrotational
in a neighborhood of the 
interface $\Gamma_1$, 
we have $\bar\Delta v_{03}=0$ there, and
we simply get
$\norm{ \partial_t q(0) }_{2.5} \leq  P( \norm{ v_0}_3)$
and
$\norm{ \partial_t v(0) }_{1.5} \leq  P( \norm{ v_0}_3)$.

It is now clear how to proceed to bound higher time derivatives at the time zero.
We further differentiate the equations in time, evaluate at $t=0$, and use the 
estimates derived for lower time derivatives at time zero. We conclude that
\begin{align}
\begin{split}
 \norm{\partial_t v(0)}_{2.5} & + \norm{\partial^2_t v(0)}_{1.5}
+ \norm{\partial^3_t v(0)}_0
\\
&
+ \norm{q(0)}_3 + \norm{\partial_t q(0)}_{2.5} + \norm{\partial^2_t q(0)}_1 \leq 
 P( \norm{ v_0}_3, \norm{ v_{03}}_{4,\Ga_1} ).
\end{split}
\label{estimate_time_zero}
\end{align}
Combining 
\eqref{energy_identity_L_2}, \eqref{I_def},
 \eqref{I_2_def},
\eqref{estimate_I_1}, \eqref{estimate_I_21},
\eqref{estimate_I_22}, \eqref{estimate_I_23_24}, and
\eqref{estimate_time_zero}, we obtain
\eqref{partial_3_t_v_estimate}.

\begin{remark}
The arguments in this section clarify why 
our priori bounds also depend on a higher norm of 
$v_{03}$ 
on the boundary.
We see from the above boundary value problem for $\partial_t q(0)$ that this term is 
controlled by  $\overline{\Delta} v_3 = \overline{\Delta} (v \cdot N)$ on the boundary
$\Ga_1$. Since we want to bound $\partial^2_t v$ in $H^{1.5}$, we need $\partial^2_t v$
in $H^1(\Ga_1)$. By \eqref{Lagrangian_free_Euler_eq} differentiated in time, this requires,
considering the tangent component of this equation, that $\partial_t q(0)$ 
belongs to $H^2(\Ga_1)$.
Thus, one needs $\overline{\Delta} 
v_{3}
\in H^2(\Ga_1)$.
\label{remark_extra_regularity}
\end{remark}

\section{Energy estimate on the two times differentiated system}\label{section_H_1_estimate}
In this section we prove the next lemma, which
provides an upper bound on the $\dot H^{2}(\Gamma_1)$ norm
of $\Pi\partial_{t} v$.
In the final section, we show that this leads to a bound 
on $\Vert \partial_{t}v\Vert_{H^{2}(\Gamma_1)}$.

\begin{lemma}
\label{L02}
We have
\begin{align}
\begin{split}
\norm{\overline{\partial} \partial^2_t v}_0^2 +  
\norm{ \overline{\partial}{}^2 (\Pi \partial_t v)}^2_{0,\Ga_1}
\leq & 
\,
\widetilde{\epsilon}  \norm{ \partial_t v}_{2.5}^2
+\ccPz
+ P( \norm{v}_3,\norm{q}_{1.5,\Ga_1})
+
\int_0^t \ccP.
\end{split}
\label{partial_2_t_v_estimate}
\end{align}
\end{lemma}

\begin{proof}[Proof of Lemma~\ref{L02}]
We apply $\partial_m \partial^2_t$ to \eqref{Lagrangian_free_Euler_eq}, contract with
$\partial_m \partial^2_t v_\al$, integrate in time and space and find
\begin{align}
\begin{split}
\frac{1}{2} \int_\Om \partial_m \partial^2_t v_\al \partial_m \partial^2_t v_\al 
= & 
\int_0^t \int_\Om  \partial_\mu \partial_m \partial^2_t v_\al a_{\mu\al} \partial^2_t \partial_m q 
- \int_0^t \int_{\Ga_1} \partial_m \partial^2_t v_\al a_{3 \al} \partial^2_t \partial_m q
\\
&
 - \int_0^t \int_\Om \partial_m \partial^2_t v_\al r_{\al m}
 +\left. \frac{1}{2} \int_\Om \partial_m \partial^2_t v_\al \partial_m \partial^2_t v_\al 
\right|_{t=0} 
\\
=& J_1 + J_2 + J_3 + J_0
,
\end{split}
\nonumber
\end{align}
where in the second equality we integrated by parts in $\partial_{m}$
and used \eqref{div_identity} with \eqref{conditions_Ga_0}; 
we also denoted
\begin{align}
\begin{split}
r_{\al m} = & \partial^2_t \partial_m a_{\mu\al}\partial_\mu q
+ 2 \partial_t \partial_m a_{\mu \al} \partial_t \partial_\mu q
+ \partial_m a_{\mu\al} \partial^2_t \partial_\mu q + \partial^2_t a_{\mu \al} \partial_\mu 
\partial_m q
+ 2 \partial_t a_{\mu \al}\partial_t \partial_\mu \partial_m q.
\end{split}
\nonumber
\end{align}
The treatment of the above terms is parallel to the previous proof 
(the derivatives $\partial_{t}^{3}$ are replaced by
$\partial_{t}^2\partial_{m}$), and thus we only sketch the details.
Since \eqref{Lagrangian_free_Euler_div} implies $\partial_m
\partial^2_t (a_{\mu\al} \partial_\mu v_\al) = 0$, we have
\begin{align}
\begin{split}
J_ 1=& 
- \int_0^t \int_\Om \partial_m a_{\mu \al} \partial_\mu \partial^2_t v_\al 
\partial^2_t \partial_m q
- 2 \int_0^t \int_\Om \partial_m \partial_t  a_{\mu \al} \partial_\mu \partial_t v_\al 
\partial^2_t \partial_m q
\\
&
- 2 \int_0^t \int_\Om \partial_t a_{\mu \al}  \partial_m  \partial_\mu \partial_t v_\al 
\partial^2_t \partial_m q
- \int_0^t \int_\Om \partial_m \partial^2_t a_{\mu \al} \partial_\mu v_\al 
\partial^2_t \partial_m q
\\
&
- \int_0^t \int_\Om  \partial^2_t a_{\mu \al} \partial_m \partial_\mu v_\al 
\partial^2_t \partial_m q
   .
\end{split}
\nonumber
\end{align}
All the integrals are bounded 
by $\int_{0}^{t}\ccP$
using 
H\"older and Sobolev inequalities.
Similarly, we have
$J_0\leq \ccPz$.
Also, the term $J_3$ is treated by the same procedure just used for $J_1$,
yielding
$J_3\leq \int_{0}^{t}\ccP$.

The remaining
term 
$J_2
= 
-\int_{0}^{t}\int_{\Gamma_1}\partial_m\partial_{t}^2 v_{\alpha} a_{3\alpha}\partial_{t}^2\partial_{k}q
$
may be
rewritten as
\begin{align}
\begin{split}
J_2 = & 
- \int_0^t \int_{\Ga_1} \partial_m \partial^2_t (a_{3 \al} q) \partial_m \partial^2_t v_\al 
+\int_0^t \int_{\Ga_1} \partial_m (\partial^2_t a_{3\al} q ) \partial_m \partial^2_t v_\al 
\\
& 
+ 2\int_0^t \int_{\Ga_1} \partial_m (\partial_t a_{3\al} \partial_t q  )\partial_m \partial^2_t v_\al 
 + \int_0^t \int_{\Ga_1} \partial_m a_{3\al} \partial^2_t q \partial_m \partial^2_t v_\al 
 \\
 = &
 J_{21} + J_{22} + J_{23} + J_{24}.
\end{split}
\label{J_2_break_up}
\end{align}
To estimate $J_{21}$, we use \eqref{Lagrangian_bry_q} and obtain
\begin{gather}
J_{21} = \si\int_0^t \int_{\Ga_1} 
\partial_m \partial^2_t (\sqrt{g} \Delta_g \eta_\al ) \partial_m \partial^2_t v_\al .
\nonumber
\end{gather}
This term is now bounded in the same fashion as $I_{1}$ in 
the previous proof.
Namely, with $\partial_m \partial^2_t$ replacing $\partial^3_t$,
we use \eqref{partial_Laplacian_eta_identity} and split $J_{21}$ similarly to
\eqref{I_1_break_up}, yielding
\begin{gather}
J_{21} = J_{211} + J_{212} + J_{213} + J_{214},
\label{J_21_break_up}
\end{gather}
where $J_{211}$ is the analogue of $I_{11}$, $J_{212}$ the analogue of $I_{12}$,
$J_{213}$ the analogue of $I_{13}+I_{14}$, and $J_{214}$ the analogue of 
$I_{15}+I_{16}$. The terms $I_{211}$ and $I_{212}$ contain the products of
top derivatives and are estimated in the same way as $I_{11}$ and $I_{12}$.
The remainders $J_{213}$ and $J_{214}$ need to be treated in a slightly different
fashion than $I_{13}+I_{14}$ and $I_{15}+I_{16}$ because of the different ways the 
time and space derivatives in $\partial_t\overline{\partial}$ may be distributed 
in \eqref{partial_Laplacian_eta_identity} after taking $\overline{\partial}_A = \partial_t$.
We have
\begin{align}
\begin{split}
J_{213} = &
 -\int_0^t \int_{\Ga_1} \overline{\partial} Q(\overline{\partial} \eta)
\partial_t \overline{\partial} v \overline{\partial} \partial^2_t \overline{\partial} v
 -\int_0^t \int_{\Ga_1} \partial_t Q(\overline{\partial} \eta)
\overline{\partial}{}^2 v \overline{\partial} \partial^2_t \overline{\partial} v
=
J_{2131} + J_{2132}
\end{split}
\nonumber
\end{align}
and
\begin{align}
\begin{split}
J_{214} 
= & -\int_0^t \int_{\Ga_1} \overline{\partial} \partial_t Q(\overline{\partial} \eta)
\overline{\partial} v \overline{\partial} \partial^2_t \overline{\partial} v,
\end{split}
\nonumber
\end{align}
where we followed the symbolic notation in Remark~\ref{remark_rational}.
Using \eqref{partial_Q_symb} and integrating by parts in space
\begin{align}
\begin{split}
J_{2131} = & 
 \int_0^t \int_{\Ga_1} Q(\overline{\partial} \eta)
  \overline{\partial}{}^2 \eta 
\partial_t \overline{\partial}{}^2 v \partial^2_t \overline{\partial} v
 +\int_0^t \int_{\Ga_1} Q(\overline{\partial} \eta)
  \overline{\partial}{}^3 \eta 
\partial_t \overline{\partial} v \partial^2_t \overline{\partial} v
\\
&
 +\int_0^t \int_{\Ga_1} Q(\overline{\partial} \eta)
  (\overline{\partial}{}^2 \eta)^2 
\partial_t \overline{\partial} v \partial^2_t \overline{\partial} v
\\
\leq  &
\int_0^t \norm{ Q(\overline{\partial} \eta) \overline{\partial}{}^2 \eta}_{L^\infty(\Ga_1)}
\norm{\partial_t \overline{\partial}{}^2 v}_{0,\Ga_1} \norm{\partial^2_t \overline{\partial} v }_{0,\Ga_1}
\\
&
+ \int_0^t \norm{ Q(\overline{\partial} \eta) }_{L^\infty(\Ga_1)}
\norm{\overline{\partial}{}^3 \eta}_{L^4(\Ga_1)} 
\norm{\partial_t \overline{\partial} v}_{L^4(\Ga_1)} \norm{\partial^2_t \overline{\partial} v }_{0,\Ga_1}
\\
& 
+
\int_0^t \norm{\overline{\partial}{}^2 \eta}_{L^\infty(\Ga_1)}^2 \norm{\partial_t \overline{\partial}
v}_{0,\Ga_1} \norm{\partial^2_t \overline{\partial} v}_{0,\Ga_1}
\leq 
\int_{0}^{t}\ccP
\end{split}
\nonumber
\end{align}
where we used \eqref{rational_estimate}, \eqref{rational_estimate_2}, and Proposition
\ref{proposition_regularity}.
Next, integrating by parts in time
\begin{align}
\begin{split}
J_{2132} = &
-\int_0^t \int_{\Ga_1} Q(\overline{\partial} \eta) \overline{\partial} v
\overline{\partial}{}^2 v \overline{\partial} \partial^2_t \overline{\partial} v
\\
= &
- \int_{\Ga_1}  Q(\overline{\partial} \eta) \overline{\partial} v
\overline{\partial}{}^2 v  \partial_t \overline{\partial}{}^2 v
+ J_{2132,0} 
+ \int_0^t \int_{\Ga_1} \partial_t ( Q(\overline{\partial} \eta) \overline{\partial} v
\overline{\partial}{}^2 v  ) \partial_t \overline{\partial}{}^2 v
\\
= &
- \int_{\Ga_1}  Q(\overline{\partial} \eta) \overline{\partial} v
\overline{\partial}{}^2 v  \partial_t \overline{\partial}{}^2 v
+ J_{2132,0} 
+ \int_0^t \int_{\Ga_1} Q(\overline{\partial} \eta) (\overline{\partial} v)^2
\overline{\partial}{}^2 v   \partial_t \overline{\partial}{}^2 v
\\
&  
+ \int_0^t \int_{\Ga_1} Q(\overline{\partial} \eta) \partial_t \overline{\partial} v
\overline{\partial}{}^2 v   \partial_t \overline{\partial}{}^2 v
+ \int_0^t \int_{\Ga_1} Q(\overline{\partial} \eta) \overline{\partial} v
\partial_t \overline{\partial}{}^2 v  \partial_t \overline{\partial}{}^2 v,
\end{split}
\nonumber
\end{align}
where
\begin{gather}
 J_{2132,0} = 
 \left. \int_{\Ga_1}  Q(\overline{\partial} \eta) \overline{\partial} v
\overline{\partial}{}^2 v  \partial_t \overline{\partial}{}^2 v 
\right|_{t=0}
\leq \ccPz.
\nonumber
\end{gather}
A direct estimate of each term in $J_{2132}$ now produces
\begin{align}
\begin{split}
J_{213} \leq & 
\, \widetilde{\epsilon} \norm{\partial_t v}_{2.5}^2 + P(\norm{v}_{2.5+\de})
+\ccPz
+ \int_0^t  \ccP.
\end{split}
\label{estimate_J_213}
\end{align}
Moving to $J_{214}$ and recalling \eqref{partial_Q_symb},
\begin{align}
\begin{split}
J_{214} = 
&
-\int_0^t \int_{\Ga_1} \overline{\partial} \partial_t Q(\overline{\partial} \eta)
\overline{\partial} v \overline{\partial} \partial^2_t \overline{\partial} v
\\ 
=&
-\int_0^t \int_{\Ga_1}  Q(\overline{\partial} \eta)
\overline{\partial}{}^2 \eta (\overline{\partial} v)^2 
 \overline{\partial} \partial^2_t \overline{\partial} v
-\int_0^t \int_{\Ga_1}  Q(\overline{\partial} \eta)
\overline{\partial}{}^2 v
\overline{\partial} v \overline{\partial} \partial^2_t \overline{\partial} v
=
J_{2141} + J_{2142}.
\end{split}
\nonumber
\end{align}
Integrating by parts in space, we find
\begin{align}
\begin{split}
J_{2141} = & 
\int_0^t \int_{\Ga_1}  Q(\overline{\partial} \eta)
(\overline{\partial}{}^2 \eta)^2 (\overline{\partial} v)^2 
 \partial^2_t \overline{\partial} v
 +
 \int_0^t \int_{\Ga_1}  Q(\overline{\partial} \eta)
\overline{\partial}{}^3 \eta (\overline{\partial} v)^2 
 \partial^2_t \overline{\partial} v
 \\
 & +
  \int_0^t \int_{\Ga_1}  Q(\overline{\partial} \eta)
\overline{\partial}{}^2 \eta \overline{\partial} v 
\overline{\partial}{}^2 v
 \partial^2_t \overline{\partial} v.
\end{split}
\nonumber
\end{align}
With the help of  \eqref{rational_estimate}, \eqref{rational_estimate_2}, and Proposition
\ref{proposition_regularity}, we find
\begin{align}
\begin{split}
J_{2141} \leq & \,
C\int_0^t 
\norm{Q(\overline{\partial} \eta)}_{L^\infty(\Ga_1)} 
\norm{ (\overline{\partial}{}^2 \eta)^2}_{0,\Ga_1}
\norm{(\overline{\partial} v)^2}_{L^\infty(\Ga_1)} 
\norm{\partial^2_t \overline{\partial} v}_{0,\Ga_1}
\\
&
+
C\int_0^t 
\norm{Q(\overline{\partial} \eta)}_{L^\infty(\Ga_1)} 
\norm{ \overline{\partial}{}^3 \eta}_{0,\Ga_1}
\norm{(\overline{\partial} v)^2}_{L^\infty(\Ga_1)} 
\norm{\partial^2_t \overline{\partial} v}_{0,\Ga_1}
\\
&
+
C\int_0^t 
\norm{Q(\overline{\partial} \eta)}_{L^\infty(\Ga_1)} 
\norm{ \overline{\partial}{}^2 \eta}_{L^4(\Ga_1)}
\norm{\overline{\partial} v}_{L^\infty(\Ga_1)} 
\norm{\overline{\partial}{}^2 v}_{L^4(\Ga_1)} 
\norm{\partial^2_t \overline{\partial} v}_{0,\Ga_1}
\leq
\int_{0}^{t}\ccP,
\end{split}
\nonumber
\end{align}
where in the first integral on the right-hand side we used
\begin{gather}
\norm{ (\overline{\partial}{}^2 \eta)^2}_{0,\Ga_1} \leq
C\norm{ (\overline{\partial}{}^2 \eta)^2}_{1.5,\Ga_1}
\leq C\norm{ \eta}^2_{3.5,\Ga_1}.
\nonumber
\end{gather}
Next, integrating by parts in time and estimating as above, we find
\begin{align}
\begin{split}
 J_{2142} = & 
 - \int_{\Ga_1}  Q(\overline{\partial} \eta)
\overline{\partial}{}^2 v
\overline{\partial} v  \partial_t \overline{\partial}{}^2 v + 
\left.
 \int_{\Ga_1}  Q(\overline{\partial} \eta)
\overline{\partial}{}^2 v
\overline{\partial} v  \partial_t \overline{\partial}{}^2 v \right|_{t=0}
\\
&
+
\int_0^t \int_{\Ga_1}  Q(\overline{\partial} \eta)
\overline{\partial} v
\overline{\partial}{}^2 v
\overline{\partial} v  \partial_t \overline{\partial}{}^2 v
+ 
\int_0^t \int_{\Ga_1}  Q(\overline{\partial} \eta)
\partial_t \overline{\partial}{}^2 v
\overline{\partial} v  \partial_t \overline{\partial}{}^2 v
\\
&
+ 
\int_0^t \int_{\Ga_1}  Q(\overline{\partial} \eta)
\overline{\partial}{}^2 v
\partial_t \overline{\partial} v  \partial_t \overline{\partial}{}^2 v
\\
\leq &
\, 
\widetilde{\epsilon} \norm{\partial_t v}^2_{2.5} + P(\norm{v}_{2.5+\de})
+
\ccPz
+ \int_0^t \ccP.
\end{split}
\nonumber
\end{align}
Therefore,
\begin{align}
\begin{split}
J_{214} \leq &
\, \widetilde{\epsilon} \norm{\partial_t v}^2_{2.5} + P( \norm{v}_{2.5+\de}) 
+
\ccPz
+\int_{0}^{t}\ccP.
\end{split}
\label{estimate_J_214}
\end{align}
Combining \eqref{J_21_break_up}, \eqref{estimate_J_213},
and~\eqref{estimate_J_214}, we find
\begin{align}
\begin{split}
J_{21}  \leq &
- \frac{1}{C_{21}} \norm{ \overline{\partial}{}^2 (\Pi \partial_t v)}^2_{0,\Ga_1}
+
\widetilde{\epsilon}  \norm{ \partial_t v}_{2.5}^2
+\ccPz
+ P(  \norm{v}_{2.5+\de},\norm{q}_{1.5,\Ga_1})
+
\int_{0}^{t}\ccP.
\end{split}
\label{estimate_J_21}
\end{align}


We now sketch the estimates of the remaining boundary terms 
$J_{22}$, $J_{23}$, and $J_{24}$
in \eqref{J_2_break_up}.
The key is to use \eqref{a_3_ga_bry} and Proposition~\ref{proposition_regularity}.
Recalling that $N=(0,0,1)$, we have
\begin{align}
\begin{split}
J_{22} = &
\int_0^t \int_{\Ga_1} \partial_m \partial^2_t a_{3\al} q \partial_m \partial^2_t v_\al 
+\int_0^t \int_{\Ga_1} \partial^2_t a_{3\al} \partial_m  q  \partial_m \partial^2_t v_\al 
\\
\leq &
C \sum_{\al=1}^3 \int_0^t \norm{q}_{L^\infty(\Ga_1)} \norm{\overline{\partial}\partial^2_t  a_{3\al} }_{0,\Ga_1}
\norm{\overline{\partial} \partial^2_t v}_{0,\Ga_1}
+ 
C \sum_{\al=1}^3 \int_0^t \norm{\overline{\partial} q}_{L^\infty(\Ga_1)} \norm{
\partial^2_t a_{3\al} }_{0,\Ga_1}
\norm{ \overline{\partial} \partial^2_t v}_{0,\Ga_1}.
\end{split}
\nonumber
\end{align}
From \eqref{a_3_ga_bry},  we have, written symbolically,
\begin{gather}
\partial^2_t a_{3\al} = (\overline{\partial} v)^2
 + \overline{\partial} \eta \overline{\partial} \partial_t v
 \, \text{ on } \Ga_1
   .
\nonumber
\end{gather}
Differentiating this identity, we get
easily
$
J_{22} 
\leq 
\int_{0}^{t}\ccP
$,
where we used Proposition~\ref{proposition_regularity}.
The terms $J_{23}$ and $J_{24}$ are handled similarly
leading to
\begin{align}
\begin{split}
J_{22} + J_{23} + J_{24} 
\leq 
\int_{0}^{t}\ccP
.
\end{split}
\label{estimate_J_22_23_24}
\end{align}
Using \eqref{J_2_break_up}, \eqref{estimate_J_21}, 
and \eqref{estimate_J_22_23_24}, we find
\begin{align}
\begin{split}
J_{2}  \leq &
- \frac{1}{C_{21}} \norm{ \overline{\partial}{}^2 (\Pi \partial_t v)}^2_{0,\Ga_1}
+
\widetilde{\epsilon}  \norm{ \partial_t v}_{2.5}^2
+ P( \norm{v}_{2.5+\de},\norm{q}_{1.5,\Ga_1})
+
\int_0^t \ccP.
\end{split}
\label{estimate_J_2}
\end{align}
Finally, combining 
all the inequalities concludes the proof.
\end{proof}

\section{Boundary estimate on $v_3$\label{section_H_2.5_estimate}}
The purpose of this section is to establish control of
$v_{3}$ on the free boundary $\Gamma_1$.

\begin{lemma}
\label{L03}
The restriction of the third component of the velocity 
to $\Gamma_1$
satisfies
\begin{align}
\begin{split}
\norm{ v_ 3}_{2.5,\Ga_1} 
\leq &
C \norm{\partial_t q}_1 + P(\norm{v}_{2.5+\de}, \norm{q}_{1.5} ) 
+ \int_{0}^{t} \ccP.
\end{split}
\label{estimate_v_2.5_boundary}
\end{align} 
\end{lemma}

\begin{remark}
The estimates \eqref{partial_3_t_v_estimate} and  \eqref{partial_2_t_v_estimate} 
provide bounds for the normal components of $\partial^2_t v$ and $\partial_t v$. These enter in the div-curl estimates
below, thus providing bounds for  $\partial^2_t v$ and $\partial_t v$
in the interior. In order to bound $\norm{v}_3$ with the div-curl
estimates, we need to estimate $\norm{v}_{2.5,\Ga_1}$ first. For this we use \eqref{Lagrangian_bry_q} and Proposition~\ref{proposition_regularity}
rather than an energy estimate as in 
Sections~\ref{section_L_2_estimate}
and~\ref{section_H_1_estimate}. This is because
arguing as in those sections produces 
$\overline{\partial}{}^3 v$ on $\Ga_1$, for which we do not have 
a good control.
\end{remark}

{\begin{proof}[Proof of Lemma~\ref{L03}]
Differentiating \eqref{Lagrangian_bry_q} in time  and setting $\al=3$ yields
\begin{align}
\begin{split}
\sqrt{g} g^{ij} \partial^2_{ij} v_3 - \sqrt{g} g^{ij} \Ga_{ij}^k \partial_k v_3 
= & - \partial_t(\sqrt{g} g^{ij} ) \partial^2_{ij} \eta_3 - \partial_t( \sqrt{g} g^{ij} \Ga_{ij}^k ) \partial_k \eta_3 
- \frac{1}{\si} \partial_t a_{3 3} q - \frac{1}{\si} a_{3 \si} \partial_t q \, \text{ on } \, \Ga_1
 ,
\end{split}
\nonumber
\end{align}
where we also used
\eqref{Laplacian_identity_standard}.
In light of Proposition~\ref{proposition_regularity}, we have
\begin{gather}
\norm{ g_{ij} }_{2.5,\Ga_1},
\norm{ \Ga_{ij}^k }_{1.5,\Ga_1}
\leq C.
\end{gather}
Thus, by the elliptic estimates for operators with coefficients bounded in Sobolev norms (see \cite{CoutandShkollerFreeBoundary,EbenfeldEllipticRegularity})
we get
\begin{align}
\begin{split}
\norm{ v_3 }_{2.5,\Ga_1} \leq 
&
\, C
\norm{ \partial_t(\sqrt{g} g^{ij} ) \partial^2_{ij} \eta_3 }_{0.5,\Ga_1}
+ C \norm{ \partial_t( \sqrt{g} g^{ij} \Ga_{ij}^k ) \partial_k \eta_3 }_{0.5,\Ga_1}
\\
&
+C\norm{\partial_t a_{3 3} q}_{0.5,\Ga_1}
+C\norm{ a_{3 \si} \partial_t q }_{0.5,\Ga_1}
   .
\end{split}
\nonumber
\end{align}
For the first term on the right side,
\begin{align}
\begin{split}
\norm{ \partial_t(\sqrt{g} g^{ij} ) \partial^2_{ij} \eta_3 }_{0.5,\Ga_1} \leq &
C \norm{ \partial_t (\sqrt{g} g^{-1} )}_{1.5,\Ga_1} \norm{\overline{\partial}{}^2 \eta_3}_{0.5,\Ga_1} 
\leq
C \norm{ Q(\overline{\partial} \eta ) \partial_t \overline{\partial} \eta}_{1.5,\Ga_1} \norm{\overline{\partial}{}^2 \eta_3}_{0.5,\Ga_1} 
\\\leq & 
C\norm{v}_3 \norm{\overline{\partial}{}^2 \eta_3}_{0.5,\Ga_1} 
\leq
C\norm{v}_3 ,
\end{split}
\nonumber
\end{align}
where we used \eqref{rational_estimate}
and Lemma~\ref{lemma_auxiliary_long}(i).
For the next term, we use  \eqref{metric_def}, \eqref{Christoffel_identity}, and  \eqref{identity_det_g} to compute
\begin{align}
\begin{split}
\partial_t (\sqrt{g} g^{ij}  \Ga_{ij}^k ) = &
-\frac{1}{\sqrt{g}} g^{pq} \partial_p \eta_\tau \partial_t \partial_q \eta_\tau g^{ij} g^{kl} \partial_l \eta_\nu \partial^2_{ij} \eta_\nu
+ \sqrt{g} \partial_t( g^{ij} g^{kl} \partial_l \eta_\nu ) \partial^2_{ij} \eta_\nu 
+
\sqrt{g} g^{ij} g^{kl} \partial_l \eta_\nu \partial^2_{ij} \partial_t \eta_\nu.
\end{split}
\nonumber
\end{align}
Then
\begin{align}
\begin{split}
\nnorm{ \frac{1}{\sqrt{g}} g^{pq} \partial_p \eta_\tau \partial_t \partial_q \eta_\tau g^{ij} g^{kl} \partial_l \eta_\nu \partial^2_{ij} \eta_\nu}_{0.5,\Ga_1}
\leq 
&
\nnorm{ \frac{1}{\sqrt{g}} g^{pq} \partial_p \eta_\tau \partial_t \partial_q \eta_\tau g^{ij} g^{kl} \partial_l \eta_\nu}_{1.5,\Ga_1} 
\norm{ \partial^2_{ij} \eta_\nu}_{0.5,\Ga_1}
\\
\leq & P(\norm{\overline{\partial} \eta}_{1.5,\Ga_1} ) \norm{\overline{\partial} v}_{1.5,\Ga_1} \norm{\overline{\partial}{}^2 \eta}_{0.5,\Ga_1}
\leq 
C \norm{v}_3.
\end{split}
\nonumber
\end{align}
Also,
\begin{align}
\begin{split}
\norm{\sqrt{g} \partial_t( g^{ij} g^{kl} \partial_l \eta_\nu ) \partial^2_{ij} \eta_\nu }_{0.5,\Ga_1} 
\leq &
\norm{ P(\overline{\partial} \eta ) \partial_t \overline{\partial} \eta }_{1.5,\Ga_1} 
\norm{\overline{\partial}{}^2 \eta}_{0.5,\Ga_1}
\leq C \norm{v}_3
\end{split}
\nonumber
\end{align}
and
\begin{align}
\begin{split}
\norm{\sqrt{g} g^{ij} g^{kl} \partial_l \eta_\nu \partial^2_{ij} \partial_t \eta_\nu}_{0.5,\Ga_1} 
\leq & P(\norm{\overline{\partial} \eta }_{1.5,\Ga_1} ) \norm{ \overline{\partial}{}^2 v}_{0.5,\Ga_12}
\leq  C \norm{v}_3,
\end{split}
\nonumber
\end{align}
where we used  \eqref{identity_g_inverse} to compute $ \partial_t( g^{ij} g^{kl} \partial_l \eta_\nu ) $,
and  \eqref{rational_estimate} and \eqref{rational_estimate_2} have  also been employed.
Therefore,
\begin{gather}
\norm{ \partial_t (\sqrt{g} g^{ij}  \Ga_{ij}^k ) \partial_k \eta_3 }_{0.5,\Ga_1}
\leq C \norm{ \partial_t (\sqrt{g} g^{ij}  \Ga_{ij}^k )  }_{0.5,\Ga_1} \norm{ \partial_k \eta_3 }_{1.5,\Ga_1}
\leq C \sum_{k=1}^2 \norm{v}_3 \norm{ \partial_k \eta_3 }_{1.5,\Ga_1}
\leq C \norm{v}_3
.
\nonumber
\end{gather}
The terms containing $\partial_t a$ and $\partial_t q$ are easily
estimated, and 
\eqref{estimate_v_2.5_boundary} is proven.
\end{proof}

\section{Div-curl estimates\label{section_div_curl}}
In this section we derive estimates for $\norm{v}_3$, 
$\norm{\partial_t v}_{2.5}$, and $\norm{\partial^2_t v}_{1.5}$,
which we summarize in the next statement.

\begin{lemma}
\label{L04}
The velocity $v$  satisfies
\begin{align}
\begin{split}
\norm{ v}_3 \leq C( \norm{v_0}_0 
+ \norm{ \curl v_0}_2 
+ \norm{v_3}_{2.5,\Ga_1})
+ \ccP\int_0^t \ccP  
,
\end{split}
\label{estimate_v_div_curl}
\end{align}
for its time derivative we have
\begin{align}
\begin{split}
\norm{ \partial_t v}_{2.5} 
\leq &  
\ccPz
+ \norm{\partial_t a}_2 \norm{v}_{2.5} + \norm{ \partial_t v_3 }_{2,\Ga_1}
+\ccP\int_{0}^{t}\ccP,
\end{split}
\label{estimate_partial_t_v_div_curl}
\end{align}
while its second time derivative satisfies
\begin{align}
\begin{split}
\norm{ \partial^2_t v}_{1.5} 
\leq  & 
\ccPz
+ \norm{\partial_t a}_2 \norm{\partial_t v}_{1.5} 
+  \norm{\partial^2_t a}_{0.5} \norm{ v}_{2.5+\de} 
+ \norm{ \partial^2_t v_3 }_{1,\Ga_1}
+ \norm{\partial_t v}_{1.5} \norm{v}_{2.5+\de} 
+ \ccP\int_{0}^{t}\ccP
.
\end{split}
\label{estimate_partial_2_t_v_div_curl}
\end{align}
\end{lemma}

The arguments below are similar to those in 
\cite{IgorMihaelaSurfaceTension,KukavicaTuffahaVicol-3dFreeEuler}.

{\begin{proof}[Proof of Lemma~\ref{L04}]
First, we recall the inequality
\begin{align}
\begin{split}
\norm{ X }_s \leq C ( \norm{\curl X}_{s-1} + \norm{\dive X }_{s-1}
+ \norm{ X \cdot N }_{s - 0.5,\partial} + \norm{X}_0 ),
\end{split}
\label{div_curl_estimate}
\end{align}
for $s \geq 1$ and any vector field $X$ on $\Om$ for which the right-hand
side is well-defined (cf.~\cite{ShkollerElliptic,CoutandShkollerFreeBoundary}), 
and the Cauchy invariance 
(see e.g.~\cite{BesseFrischCauchyInvariant,KukavicaTuffahaVicol-3dFreeEuler})
\begin{gather}
\epsilon_{\al \be \ga} \partial_\be v_\mu \partial_\ga \eta_\mu 
= (\curl v_0)_\al,
\label{Cauchy_invariance}
\end{gather}
where $\epsilon_{ \al \be \ga}$ is the Levi-Civita symbol.
First, from \eqref{Lagrangian_free_Euler_div} we have
\begin{align}
\begin{split}
\norm{ \dive v }_2 = & \norm{ (\de_{\al\be} - a_{\al\be} ) \partial_\al v_\be }_2
\leq 
C \norm{ \de_{\al\be} - a_{\al\be} }_2  \norm{ \partial_\al v_\be }_2 
\leq
\, 
\widetilde{\epsilon} \norm{v}_3.
\end{split}
\nonumber
\end{align}
Next, using \eqref{Cauchy_invariance},
\begin{align}
\begin{split}
(\curl v)_\al = & \epsilon_{\al\be\ga} \partial_\be v_\ga 
=
\epsilon_{\al\be\ga} \partial_\be v_\ga  - \epsilon_{\al\be\ga} \partial_\be v_\mu \partial_\ga 
\eta_\mu + (\curl v_0)_\al 
\\
= &
\epsilon_{\al \be \ga} \partial_\be v_\mu (\de_{\mu\ga} - \partial_\ga \eta_\mu) 
+ (\curl v_0)_\al .
\end{split}
\nonumber
\end{align}
But 
\begin{gather}
\de_{\mu\ga} - \partial_\ga\eta_\mu = - \int_0^t \partial_\ga \partial_t \eta_\mu 
= - \int_0^t \partial_\ga v_\mu,
\nonumber
\end{gather}
so that 
\begin{gather}
\norm{\curl v}_2 
\leq 
\norm{ \curl v_0}_2
+
C \norm{v}_3 \int_0^t \norm{v}_3 
\leq 
\norm{ \curl v_0}_2
+
\ccP \int_0^t \ccP
.
\nonumber
\end{gather}
Since
\begin{gather}
\norm{ v}_0 \leq \norm{v_0}_0 + C \int_0^t \norm{ \partial_t v}_0
            \leq \norm{v_0}_0 +  \int_0^t \ccP,
\nonumber
\end{gather}
we get 
\eqref{estimate_v_div_curl} by
invoking  \eqref{div_curl_estimate}.
The inequalities 
\eqref{estimate_partial_t_v_div_curl}
and
\eqref{estimate_partial_2_t_v_div_curl}
are obtained analogously.
\end{proof}

\section{Closing the estimates\label{section_closing}}
 
In this section we finally collect all our estimates together to obtain
Theorem~\ref{main_theorem}. 

\subsection{Comparing $\Pi X$ and $X_3$}

From \eqref{estimate_partial_t_v_div_curl} and \eqref{estimate_partial_2_t_v_div_curl},
we see that we need to connect 
$\norm{ X_3 }_{s - 0.5,\partial}$ 
with the norm
of $\Pi X$ which entered in the energy estimates;
cf.~the second term on the left side of~\eqref{partial_3_t_v_estimate}.
The two necessary inequalities are stated next.

\begin{lemma}
\label{L05}
We have
\begin{align}
\begin{split}
\norm{ \overline{\partial} X_3 }_{0,\Ga_1}
\leq \widetilde{\epsilon} \left(
\sum_{\al=1}^3 \norm{ \overline{\partial} X_\al }_{0,\Ga_1}
+
\sum_{\al=1}^3 \norm{  X_\al }_{1} \right)
+
C\norm{\overline{\partial}( \Pi X) }_{0,\Ga_1}
\end{split}
\label{estimate_projection_N}
\end{align}
and

\begin{align}
\begin{split}
\norm{ \overline{\partial}{}^2 X_3 }_{0,\Ga_1}
\leq 
& 
\, \widetilde{\epsilon} \norm{X}_{2.5} 
+ 
\widetilde{\epsilon}
\norm{ \overline{\partial}{}^2 X }_{0,\Ga_1}
+
C\norm{\overline{\partial}{}^2 (\Pi X) }_{0,\Ga_1}
+ P(\norm{q}_{1.5,\Ga_1}, \norm{ \partial_t X(0)}_{0})  
+  \int_0^t P(\norm{\partial_t X}_0 ).
\end{split}
\label{estimate_projection_N_2_improved}
\end{align}
\end{lemma}


{\begin{proof}[Proof of Lemma~\ref{L05}]
Recalling \eqref{projection_identity}
\begin{align}
\begin{split}
(\Pi X)_3 = & \Pi_{3\la} X_\la 
=  (\de_{3\la} - g^{kl} \partial_k \eta_3 \partial_l \eta_\la ) X_\la 
= X_3 - g^{kl} \partial_k \eta_3 \partial_l \eta_\la X_\la,
\end{split}
\label{Pi_X_difference}
\end{align}
so that 
\begin{align}
\begin{split}
\norm{ \overline{\partial} ( (\Pi X)_3 - X_3 ) }_{0,\Ga_1}
= &
\norm{ 
\overline{\partial} ( g^{kl} \partial_k \eta_3 \partial_l \eta_\la X_\la )
}_{0,\Ga_1}
\\
\leq &
C \norm{ 
  g^{kl} \partial_k \eta_3 \partial_l \eta_\la  \overline{\partial}X_\la 
}_{0,\Ga_1}
+
C \norm{ 
 \overline{\partial}(  g^{kl} \partial_k \eta_3 \partial_l \eta_\la) X_\la 
}_{0,\Ga_1}.
\end{split}
\nonumber
\end{align}
Since
\begin{gather}
\left. g^{kl} \partial_k \eta_3 \partial_l \eta_\la \right|_{t=0} = 0
\label{eta_3_zero_time_zero}
\end{gather}
due to $\eta_3(0) = x_3$, we have 
\begin{align}
\begin{split}
\norm{ g^{kl} \partial_k \eta_3 \partial_l \eta_\la }_{1.5,\Ga_1}
= &\nnorm{ \int_{0}^{t}\partial_t (g^{kl} \partial_k \eta_3 \partial_l \eta_\la ) }_{1.5,\Ga_1}
\leq  C \int_0^t \norm{ \partial_t \overline{\partial} \eta}_{1.5,\Ga_1}
\leq  C \int_0^t \norm{v}_3
\leq  CM t.
\end{split}
\nonumber
\end{align}
Hence,
\begin{align}
\begin{split}
\norm{ 
  g^{kl} \partial_k \eta_3 \partial_l \eta_\la  \overline{\partial}X_\la 
}_{0,\Ga_1} \leq &
C \norm{ 
  g^{kl} \partial_k \eta_3 \partial_l \eta_\la 
}_{1.5,\Ga_1} \norm{  \overline{\partial}X_\la  }_{0,\Ga_1} 
\leq  C M t \sum_{\la=1}^3 \norm{  \overline{\partial}X_\la  }_{0,\Ga_1} .
\end{split}
\nonumber
\end{align}
Next,
\begin{align}
\begin{split}
&\norm{ 
 \overline{\partial}(  g^{kl} \partial_k \eta_3 \partial_l \eta_\la) X_\la 
}_{0,\Ga_1}
\leq 
C \norm{ 
 \overline{\partial}(  g^{kl} \partial_k \eta_3 \partial_l \eta_\la) 
}_{L^4(\Ga_1)}
\sum_{\la=1}^3 \norm{ X_\la }_{L^4(\Ga_1)}
\\
&
\qquad{}
\leq 
C \norm{ 
 \overline{\partial}(  g^{kl} \partial_k \eta_3 \partial_l \eta_\la) 
}_{0.5, \Ga_1}
\sum_{\la=1}^3 \norm{ X_\la }_{0.5,\Ga_1}
\leq 
CM t\sum_{\la=1}^3 \norm{ X_\la }_{1}.
\end{split}
\nonumber
\end{align}
The estimate \eqref{estimate_projection_N} now follows upon choosing $t$ sufficiently small.

We now turn to \eqref{estimate_projection_N_2}. From \eqref{Pi_X_difference}, we have
\begin{align}
\begin{split}
\overline{\partial}{}^2 (g^{kl} \partial_k \eta_3 \partial_l \eta_\la X_\la )
= &
\,
\overline{\partial}{}^2 g^{kl} \partial_k \eta_3 \partial_l \eta_\la X_\la 
+
2 \overline{\partial} g^{kl} \overline{\partial} \partial_k \eta_3 \partial_l \eta_\la X_\la 
+
2 \overline{\partial} g^{kl} \partial_k \eta_3 \overline{\partial}\partial_l \eta_\la X_\la 
\\
& 
+
2 \overline{\partial} g^{kl} \partial_k \eta_3 \partial_l  \eta_\la \overline{\partial} X_\la 
+
g^{kl} \overline{\partial}{}^2 \partial_k \eta_3 \partial_l \eta_\la X_\la 
+
2 g^{kl} \overline{\partial} \partial_k \eta_3 \overline{\partial} \partial_l \eta_\la X_\la 
\\
& 
+
g^{kl} \overline{\partial} \partial_k \eta_3 \partial_l \eta_\la \overline{\partial} X_\la 
+
g^{kl} \partial_k \eta_3 \overline{\partial}{}^2 \partial_l \eta_\la X_\la 
+
2 g^{kl} \partial_k \eta_3 \overline{\partial}\partial_l \eta_\la \overline{\partial} X_\la 
\\
&
+
g^{kl} \overline{\partial} \partial_k \eta_3 \partial_l \eta_\la \overline{\partial} X_\la 
+
g^{kl} \partial_k \eta_3 \partial_l \eta_\la \overline{\partial}{}^2 X_\la 
\\
 = &A_1 + \cdots +  A_{11}.
\end{split}
\nonumber
\end{align}
We estimate the terms similarly to those leading to \eqref{estimate_projection_N}.
We  make successive use of the following facts.
As before, the terms in $\overline{\partial} \eta_3$ are small (in appropriate norms) 
in light of \eqref{eta_3_zero_time_zero} and what immediately follows.
 But here also the terms $\overline{\partial}{}^2 \eta_\al$ 
are small (in appropriate norms), for $\al=1,2,3$, again because $\eta(0)$ is the identity, so that, 
as before
\begin{gather}
\overline{\partial}{}^2 \eta = \int_0^t \overline{\partial}{}^2 v.
\nonumber
\end{gather}
To proceed, we note
that from \eqref{metric_def} and \eqref{identity_g_inverse} we have
\begin{align}
\overline{\partial} g^{-1} & = Q(\overline{\partial} \eta) \overline{\partial}{}^2 \eta
\nonumber
\end{align}
and
\begin{align}
\overline{\partial}{}^2 g^{-1} & = Q(\overline{\partial} \eta) (\overline{\partial}{}^2 \eta)^2
+ Q(\overline{\partial} \eta) \overline{\partial}{}^3 \eta,
\nonumber
\end{align}
where we adopted a symbolic notation akin to Remark~\ref{remark_rational}.
We have
\begin{align}
\begin{split}
\norm{ A_1 }_{0,\Ga_1} 
\leq & 
C \norm{ \overline{\partial}{}^2 g^{-1} }_{0,\Ga_1} \norm{\overline{\partial} \eta_3 }_{1.5,\Ga_1}
\norm{ \overline{\partial} \eta }_{1.5,\Ga_1} \norm{ X }_{1.5,\Ga_1}
\\
\leq &  P(\norm{ Q(\overline{\partial} \eta )}_{1.5,\Ga_1})
 (\norm{ (\overline{\partial}{}^2 \eta )^2}_{0,\Ga_1} + 
\norm{\overline{\partial}{}^3 \eta}_{0,\Ga_1} ) \widetilde{\epsilon} \norm{X}_2
\\
\leq &
\, \widetilde{\epsilon} P
 (\norm{ \overline{\partial}{}^2 \eta }^2_{L^4(\Ga_1)} + 
\norm{ \eta}_{3,\Ga_1} )  \norm{X}_2
\\
\leq & \, \widetilde{\epsilon} P(\norm{q}_{1.5,\Ga_1} ) \norm{X}_2
\end{split}
\nonumber
\end{align}
and
\begin{align}
\begin{split}
\norm{A_2}_{0,\Ga_1} \leq & C \norm{ \overline{\partial}{}^2\eta }_{1.5,\Ga_1}
\norm{ \overline{\partial}{}^2 \eta_3}_{0,\Ga_1} \norm{X}_{1.5,\Ga_1}
\leq  \, \widetilde{\epsilon} P(\norm{q}_{1,\Ga_1}) \norm{X}_2.
\end{split}
\nonumber
\end{align}
The terms $A_3$ and $A_4$ are estimated similarly as
\begin{align}
\begin{split}
\norm{A_3}_{0,\Ga_1}  + \norm{A_4}_{0,\Ga_1}  
\leq & \, \widetilde{\epsilon} P(\norm{q}_{1,\Ga_1}) \norm{X}_2.
\end{split}
\nonumber
\end{align}
The term $A_5$ is different and it gives
the term without $\widetilde{\epsilon}$ in \eqref{estimate_projection_N_2},
\begin{align}
\begin{split}
\norm{ A_5 }_{0,\Ga_1} \leq & C \norm{ g^{-1} \overline{\partial}{}^3 \eta 
\overline{\partial} \eta X }_{0,\Ga_1} 
\leq  C \norm{\eta}_{3.5,\Ga_1} \norm{ X}_{1.5,\Ga_1}
\leq  P(\norm{q}_{1.5,\Ga_1} )\norm{X}_2.
\end{split}
\nonumber
\end{align}

Next,
\begin{align}
\begin{split}
\norm{ A_6}_{0,\Gamma_1} \leq & C \norm{ \overline{\partial}{}^2 \eta_3 }_{L^4(\Ga_1)}
\norm{ \overline{\partial}{}^2 \eta}_{L^4(\Ga_1)} \norm{X}_{1.5,\Ga_1} 
\leq   \, \widetilde{\epsilon} \norm{X}_2
\end{split}
\nonumber
\end{align}
and 
\begin{align}
\begin{split}
\norm{ A_7}_{0,\Gamma_1} \leq & C \norm{ \overline{\partial}{}^2 \eta_3 }_{L^4(\Ga_1)}
\norm{ \overline{\partial}{} \eta}_{L^4(\Ga_1)} \norm{\overline{\partial}X}_{L^4(\Ga_1)} 
\leq   \, \widetilde{\epsilon} \norm{ X}_2.
\end{split}
\nonumber
\end{align}
The terms $A_8$, $A_9$ and $A_{10}$ are handled similarly. Finally,
\begin{align}
\begin{split}
\norm{ A_{11} }_{0,\Ga_1} \leq & C \norm{ \overline{\partial} \eta_3 }_{1.5,\Ga_1}
 \norm{ \overline{\partial} \eta }_{1.5,\Ga_1} \norm{ \overline{\partial}{}^2 X }_{0,\Ga_1}
 \leq  \widetilde{\epsilon} \norm{ \overline{\partial}{}^2 X }_{0,\Ga_1}.
\end{split}
\nonumber
\end{align}
Combining the above gives 
\begin{align}
\begin{split}
\norm{ \overline{\partial}{}^2 X_3 }_{0,\Ga_1}
\leq 
& 
\, \widetilde{\epsilon} 
\sum_{\al=1}^3 \norm{ \overline{\partial}{}^2 X_\al }_{0,\Ga_1}
+
P(\norm{q}_{1.5,\Ga_1} ) \norm{  X }_{2} 
+
C\norm{\overline{\partial}{}^2( \Pi X )}_{0,\Ga_1}.
\end{split}
\label{estimate_projection_N_2}
\end{align} 
Using interpolation, Young's, and Jensen's inequalities
as in 
\eqref{estimate_partial_t_v_lower_order},
we readily
obtain 
\eqref{estimate_projection_N_2_improved}.
\end{proof}

\subsection{Eliminating the lower order terms\label{section_eliminating_lower_order}}

Our estimates so far contain some lower order terms on the right-hand sides that need to be eliminated.

In \eqref{estimate_v_2.5_boundary}, we have $\norm{\partial_t q}_1$. 
Arguing as in 
\eqref{estimate_partial_t_v_lower_order}
we obtain
\begin{gather}
\norm{\partial_t q}^2_1 \leq \widetilde{\epsilon} \norm{\partial_t q}_2^2 
+\ccPz
+ \int_0^t 
\ccP.
\label{partial_t_q_lower_order}
\end{gather}
Next, we look at the term  $P(\norm{v}_{2.5+\de}, \norm{q}_{1.5,\Ga_1})$. 
Up to here, $\de$ was any number between $0$ and $0.5$. 
Now we determine it.
By the Cauchy-Schwarz inequality, we have
\begin{gather}
P(\norm{v}_{2.5+\de}, \norm{q}_{1.5,\Ga_1}) \leq P(\norm{v}_{2.5+\de}) + P(\norm{q}_{1.5,\Ga_1})
,
\nonumber
\end{gather}
where
$P(\norm{v}_{2.5+\de}) $ is a linear combination of terms of the form $\norm{v}_{2.5+\de}^a$, where
$a \geq 1$ is integer. Since
\begin{gather}
\norm{v}_{2.5+\de} \leq \frac{1}{2} + \frac12 \norm{v}_{2.5+\de}^2,
\nonumber
\end{gather}
we may assume $a \geq 2$. The interpolation inequality gives
\begin{gather}
\norm{v}_{2.5+\de}^a \leq  C\norm{v}_3^{\fractext{(0.5-\de)a}{3}} 
\norm{v}_0^{\fractext{(2.5+\de)a}{3}}.
\label{interpolation_lower_order_1}
\end{gather}
Let $a_m$ be the largest of the powers $a$, and choose $\de \equiv \de_m$ sufficiently close to $0.5$ so that
\begin{gather}
\frac{ 6 }{a_m (0.5-\de_m) } > 1.
\nonumber
\end{gather}
Set
$
p_a  = \fractext{ 6 }{a (0.5-\de_m) }
$,
and note that $p_a \geq p_{a_m} > 1$, so that the conjugate exponent
$q_a$
is well-defined
and also greater than $1$. Therefore, we may
apply Young's inequality with epsilon to the right hand-side of \eqref{interpolation_lower_order_1}
to find
\begin{align}
\begin{split}
\norm{v}_{2.5+\de}^a \leq 
&
\, \widetilde{\epsilon} \norm{v}_3^2 + C  \norm{v}_0^b,
\end{split}
\nonumber
\end{align}
where
$
b = \fractext{(2.5+\de)a q}{3}
$.
Since $ a \geq 2$ and $q > 1$, we have $b>1$ and hence we may use Jensen's inequality to obtain
\begin{gather}
\norm{v}_0^b \leq P(\norm{v_0}, \norm{v}_{3.5,\Ga_1} ) 
+ \int_0^t \ccP.
\nonumber
\end{gather}
We therefore conclude that
\begin{align}
\begin{split}
P(\norm{v}_{2.5+\de}) \leq 
&
\, \widetilde{\epsilon} \norm{v}_3^2 + C +  \int_0^t 
\ccP.
\end{split}
\nonumber
\end{align}
For $P(\norm{q}_{1.5,\Ga_1})$, we simply note that 
$
\norm{q}_{1.5,\Ga_1} \leq C\norm{q}_{2.5+\widetilde{\de}}
$
for any $0 < \widetilde{\de} < 0.5$. Thus we can proceed exactly as in the estimate for $P(\norm{v}_{2.5+\de})$.
We finally conclude
\begin{align}
\begin{split}
P(\norm{v}_{2.5+\de}, \norm{q}_{1.5,\Ga_1} ) \leq 
&
\, \widetilde{\epsilon}( \norm{v}_3^2 + \norm{q}_3^2 ) + C 
+  \int_0^t \ccP.
\end{split}
\label{polynomial_lower_order}
\end{align}

\section{Proof of the main theorem}

We are now ready to combine all our estimates. 
This consists essentially of keeping track of the energy and
pressure estimates and verifying that they always give estimates
for higher order terms in terms time integrals 
of the terms to be bounded.
We only sketch the details since the Gronwall argument is
fairly standard.

{\begin{proof}[Proof of Theorem~\ref{main_theorem}]
From \eqref{estimate_partial_t_v_div_curl} and \eqref{estimate_partial_2_t_v_div_curl},
we see that we need $\norm{\partial_t v_3}_{2,\Ga_1}$
and 
$\norm{\partial^2_t v_3}_{1,\Ga_1}$. By interpolation, we have
\begin{align}
\begin{split}
\norm{ \partial_t v_3 }_{2,\Ga_1} \leq & C
\norm{ \overline{\partial}{}^2 \partial_t v_3 }_{0,\Ga_1} 
+ C \norm{ \partial_t v_3}_{0,\Ga_1}
\\ 
\leq &
C \norm{ \overline{\partial}{}^2 \partial_t v_3 }_{0,\Ga_1} 
+ P(\norm{v_0}_3,\norm{v_0}_{3.5,\Ga_1}) 
+  \int_0^t \ccP
\end{split}
\nonumber
\end{align}
and
\begin{align}
\begin{split}
\norm{ \partial^2_t v_3 }_{1,\Ga_1} \leq & C
\norm{ \overline{\partial} \partial^2_t v_3 }_{0,\Ga_1} 
+ C \norm{ \partial^2_t v_3}_{0,\Ga_1}
\\ 
\leq &
C \norm{ \overline{\partial} \partial^2_t v_3 }_{0,\Ga_1} 
+ \widetilde{\epsilon} \norm{\partial^2_t v}_{1.5} 
+  P(\norm{v_0}_3,\norm{v_0}_{3.5,\Ga_1}) 
+  \int_0^t \ccP.
\end{split}
\nonumber
\end{align}

Thus, it suffices to estimate
$\norm{ \overline{\partial}{} \partial^2_t v_3 }_{0,\Ga_1} $ 
and  $\norm{ \overline{\partial}{}^2 \partial_t v_3 }_{0,\Ga_1} $ which,
in light of \eqref{estimate_projection_N} and \eqref{estimate_projection_N_2_improved},
are estimated, up to some harmless lower order terms and terms with $\widetilde{\epsilon}$,
 in terms of $\norm{ \overline{\partial} (\Pi \partial^2_t v)}_{0,\Ga_1}$
and $\norm{ \overline{\partial}{}^2 (\Pi \partial_t v)}_{0,\Ga_1}$.
An estimate for these last two terms is given in  \eqref{partial_3_t_v_estimate}
and \eqref{partial_2_t_v_estimate}.

Therefore, combining the estimates of Proposition~\ref{proposition_pressure_estimates}
with \eqref{estimate_v_div_curl},
\eqref{estimate_partial_t_v_div_curl},  \eqref{estimate_partial_2_t_v_div_curl},
invoking  \eqref{partial_3_t_v_estimate}, \eqref{partial_2_t_v_estimate}, 
and \eqref{estimate_v_2.5_boundary}, eliminating the lower order terms with
\eqref{partial_t_q_lower_order} and \eqref{polynomial_lower_order}, 
and proceeding as in \cite{IgorMihaelaSurfaceTension}, we obtain
a Gronwall-type inequality as in 
\cite{DragomirGrownwall} (or cf.~\cite{CoutandShkollerFreeBoundary,KukavicaTuffahaVicol-3dFreeEuler,IgorMihaelaSurfaceTension}), which establishes Theorem~\ref{main_theorem}.
\end{proof}

\bibliographystyle{plain}
\bibliography{References.bib}

\end{document}